\theoremstyle{plain}
 \newtheorem{thm}{Theorem}[section]
 \newtheorem{prop}{Proposition}[section]
 \newtheorem{lem}{Lemma}[section]
\theoremstyle{definition}
 \newtheorem{exm}{Example}[section]
\theoremstyle{remark}
 \newtheorem{rem}{Remark}[section]
 \numberwithin{equation}{section}
\renewcommand{\leq}{\leqslant}\renewcommand{\geq}{\geqslant}
\title[A new test for convergence of positive series]{A new test for convergence of positive series}
\subjclass[2010]{40A05; 26A12}
\keywords{positive series, Karamata's theorem, regular variation, convergence/divergence test, partial order, rate of convergence}
\author[Abramov]{\bfseries Vyacheslav Abramov} 
\address{ 
24 Sagan Drive \\
Cranbourne North, Victoria-3977 \\
Australia}
\email{vabramov126@gmail.com}
\author[Cadena]{\bfseries Meitner Cadena} 
\address{ 
Departamento de Ciencias Exactas \\
Universidad de las Fuerzas Armadas - ESPE \\
Sangolqui\\
Ecuador
}
\email{mncadena2@espe.edu.ec}
\author[Omey]{\bfseries Edward Omey} 
\address{ 
Faculty of Economics and Business-Campus Brussels \\
KU Leuven at Campus Brussels \\
Brussels\\
Belgium
}
\email{edward.omey@kuleuven.be}
\thanks{Communicated by ...} 
\begin{document}

{\begin{flushleft}\baselineskip9pt\scriptsize
MANUSCRIPT
\end{flushleft}}
\vspace{18mm} \setcounter{page}{1} \thispagestyle{empty}

\begin{abstract}
The paper provides a new test of convergence and divergence of positive series. In particular, it extends the known test by Margaret Martin [
\emph{Bull. Amer. Math. Soc.} \textbf{47}, 452--457 (1941)].
\end{abstract}

\maketitle

\section{Introduction}

The tests for convergence/divergence of positive series have a long history going back to d'Alembert \cite{d'A} and Cauchy \cite{Cauchy}, who established the first most elementary results on their convergence or divergence. The further extensions of the original studies were provided by Raabe, Gauss, Bertrand, De Morgan, Kummer and many other mathematicians. Nowadays there is a large variety of tests on convergence/divergence of positive series, and most of the existing practical problems that involve positive series are resolved. Nevertheless, the problem has a number of important theoretical applications arising in the theory of probability, stochastic processes and their real life applications (e. g. \cite{Abramov,CKO1,Daduna}).

In most of the earlier studies the known tests of convergence/divergence of positive series were supposed to be closely connected with the classes of functions regularly varying at infinity (e.g. Bingham, Goldie and Teugels
 \cite{Bingham}). Recently, Cadena, Kratz and Omey \cite{CKO} described a new class of functions that covers the class of functions regularly varying at infinity, and in the other recent paper of these authors \cite{CKO1} that new class of functions was used for characterization of the tail probability distribution functions under general settings. Taking that new class into consideration enables us to further reconsider and develop the earlier tests on convergence/divergence of positive series. The approach of the present paper is based on studying these problems on convergence/divergence from this new position.

The starting point in the present paper is Raabe's test.\
The test implies a simple logarithmic test, which is known
as Cauchy's second test. This simple test can be extended and leads to a new
test based on logarithms. The same framework has been used by \v{R}eh\'ak \cite{Rehak1,Rehak2} to
extend the formula of Raabe. We show how the new definitions lead to new
convergence/divergence tests. For the undecided cases, we generalize an old
result of Martin \cite{Martin}. In the final remarks, we provide some
one-sided results.

The rest of the paper is organized as follows. In Section \ref{S2}, we first recall Raabe's test, provide its extended version and establish the connection between Raabe's test and a simple $\log$-test. In Section \ref{S3}, we first extend the simple $\log$-test, and on the basis of that extension we derive the main conditions on convergence or divergence of positive series. In Section \ref{S4}, we study the case under which no direct decision can be made. In Section \ref{S5}, we conclude the paper, where the possible development of the theory is discussed, as well as some one-sided results are provided.

\section{A simple log test{}}\label{S2}

The test of Raabe deals with sequences of positive numbers $(a_{n})$. The
sequence is called a Raabe sequence if the following limit exists:%
\begin{equation}
\lim_{n\rightarrow \infty }n\left(\frac{a_{n+1}}{a_{n}}-1\right)=\theta.
\label{1}
\end{equation}%
In traditional applications of Raabe sequences, limit relation \eqref{1} implies that
\begin{equation}\label{1.5}
\begin{cases}\sum_{i=1}^{\infty}a_{i}=\infty, &\text{if} \ \theta >-1,\\
\sum_{i=1}^{\infty }a_{i}<\infty, &\text {if} \ \theta <-1,\\
\text{no decision can be made}, &\text{if} \ \theta=-1.
\end{cases}
\end{equation}
%
%
%
%
For a recent review of Raabe's test, we refer to Hammond \cite{Hammond}.

However actually limit relation \eqref{1} is more informative than that is presented by \eqref{1.5}.
It is well-known
that \eqref{1} implies that $(a_{n})$ is a regularly varying sequence (e.g. Bingham, Goldie and Teugels
 \cite[Chapter 1.9]{Bingham} or Bojanic and Seneta \cite{BS}), and Karamata's theorem (see \cite[Chapter 1.9]{Bingham}) can be used for establishing the properties of partial sums.
Namely, we have the following result.

\begin{lem}\label{lem1}
Assume that \eqref{1} holds.

\begin{enumerate}
\item [$(i)$] If $\theta >-1$, then $\sum_{i=1}^{n}a_{i}\rightarrow \infty $ and $%
\sum_{i=1}^{n}a_{i}\sim na_{n}/(1+\theta )$.

\item [$(ii)$] If $\theta <-1$, then $\sum_{i=1}^{\infty }a_{i}<\infty $ and $%
\sum_{i=n}^{\infty }a_{i}\sim -na_{n}/(1+\theta )$.

\item [$(iii)$] If $\theta =-1$, then test \eqref{1} is inconclusive.
\end{enumerate}
\end{lem}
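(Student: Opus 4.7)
The plan is to exploit the stated fact (from Bingham--Goldie--Teugels or Bojanic--Seneta) that a positive sequence satisfying \eqref{1} is regularly varying of index $\theta$, and then invoke Karamata's theorem on the asymptotics of partial sums (or tails) of regularly varying sequences. Concretely, \eqref{1} yields $a_n = n^{\theta}L(n)$ for some slowly varying function $L$. From this representation the three cases separate cleanly according to the sign of $\theta+1$.

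For part $(i)$, assuming $\theta>-1$, Karamata's theorem for partial sums of regularly varying sequences gives
\begin{equation*}
\sum_{i=1}^{n}a_{i}\sim\frac{n\,a_{n}}{\theta+1},\qquad n\to\infty,
\end{equation*}
and since $a_{n}=n^{\theta}L(n)$ with $\theta>-1$, the right-hand side tends to infinity, so $\sum a_i$ diverges and the stated equivalence holds. For part $(ii)$, assuming $\theta<-1$, the series $\sum n^{\theta}L(n)$ converges by Karamata's criterion, and the tail version of Karamata's theorem yields
\begin{equation*}
\sum_{i=n}^{\infty}a_{i}\sim-\frac{n\,a_{n}}{\theta+1},\qquad n\to\infty,
\end{equation*}
which is exactly the claim. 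Part $(iii)$ is established by exhibiting two sequences both satisfying \eqref{1} with $\theta=-1$ but differing in behavior, for instance $a_n=1/n$ (divergent) and $a_n=1/(n\log^{2}n)$ (convergent, for $n\ge 2$); a direct computation verifies \eqref{1} in each case.

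The main step, and the only one with any content, is the passage from \eqref{1} to regular variation of the sequence $(a_n)$ with index $\theta$; this is the Bojanic--Seneta result cited just before the lemma, so I would simply reference it rather than reprove it. After that, Karamata's theorem does all the work, and the constants $1/(1+\theta)$ and $-1/(1+\theta)$ come directly from that theorem. The only mild obstacle is choosing the correct form of Karamata's theorem (partial sum versus tail), which is dictated by whether $\theta+1$ is positive or negative; once this is done correctly the constants align with the signs stated in the lemma. No additional estimates or ad hoc arguments are needed.
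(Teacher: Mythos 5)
Your proposal is correct and follows exactly the route the paper intends: the paper gives no written proof of this lemma, merely noting that \eqref{1} makes $(a_n)$ regularly varying of index $\theta$ (Bojanic--Seneta) and that Karamata's theorem then yields the partial-sum and tail asymptotics, which is precisely your argument. Your explicit counterexample pair $1/n$ and $1/(n\log^2 n)$ for the case $\theta=-1$ is a sensible addition that the paper leaves implicit.
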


Lemma \ref{lem1} shows not only convergence/divergence of $\sum_{i=1}^{n}a_{i}$,
but also the precise rate at which this happens.

Now we rewrite (1) by using logarithms. First observe that $n\ln
(1+1/n)\rightarrow 1$. Also observe that \eqref{1} implies that $%
a_{n+1}/a_{n}\rightarrow 1$. Since $\ln (z)\sim z-1$ as $z\rightarrow 1$, then it
follows that \eqref{1} is equivalent to%
\begin{equation}
\lim_{n\rightarrow \infty }\frac{\ln (a_{n+1}/a_{n})}{\ln (w(n+1)/w(n))}%
=\lim_{n\rightarrow \infty }\frac{\bigtriangleup \ln a_{n}}{\bigtriangleup
\ln w(n)}=\theta.  \label{2}
\end{equation}%
where $w(n)=n,n\geq 1$, and $\bigtriangleup \alpha _{n}=\alpha
_{n+1}-\alpha _{n}$.

By using the Stolz-C{e}s\`{a}ro lemma and taking sums in \eqref{2} we
obtain
\begin{equation}
\lim_{n\rightarrow \infty }\frac{\ln a_{n}}{\ln w(n)}=\theta.
\label{3}
\end{equation}

For further use, we denote by $RV_{\alpha }$ the class of regularly varying
functions of index $\alpha $. The integer part of $x$ is denoted by $\left[ x%
\right] $.

Whenever $\lim \sup_{x\rightarrow \infty }f(x)/g(x)<\infty $, we write $%
f(x)\preceq g(x)$. The relation " $\preceq $ " is a partial order. If $%
f(x)\preceq g(x)$ and $g(x)\preceq f(x)$, the functions $f(x)$ and $g(x)$
are called equivalent and we write $f(x)\asymp g(x)$. If $f(x)\preceq g(x)$
and $g(x)\preceq h(x)$, then also $f(x)\preceq h(x)$.

Cadena, Kratz and Omey \cite{CKO} showed that \eqref{3} (with $w(n)=n$) holds if and only
if $f(x)=a_{\left[ x\right] }$ satisfies the following property.

\begin{lem}\label{lem2}
Assume that $w(x)=x$, and let $f(x)=a_{\left[ x\right] }$. Then \eqref{3} holds if
and only if there exist functions $A(x),B(x)\in RV_{\theta }$ so that $%
A(x)\preceq f(x)\preceq B(x)$. Moreover we have:

\begin{enumerate}
\item [$(i)$] If $\theta >-1$, then $\sum_{i=1}^{n}a_{i}\rightarrow \infty $, $%
nA(n)\preceq \sum_{i=1}^{n}a_{i}\preceq nB(n)$, and%
\[
\lim_{n\rightarrow \infty }\frac{\ln (\sum_{i=1}^{n}a_{i})}{\ln n}=\theta +1%
\text{.}
\]

\item [$(ii)$] If $\theta <-1$, then $\sum_{i=1}^{\infty }a_{i}<\infty $, $%
nA(n)\preceq \sum_{i=n}^{\infty }a_{i}\preceq nB(n)$, and%
\[
\lim_{n\rightarrow \infty }\frac{\ln (\sum_{i=n}^{\infty }a_{k})}{\ln n}%
=\theta +1\text{.}
\]
\end{enumerate}
\end{lem}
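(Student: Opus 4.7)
The plan is to split the argument into two parts: (a) the equivalence between \eqref{3} and the existence of regularly varying bounds $A(x)\preceq f(x)\preceq B(x)$ with $A,B\in RV_{\theta}$, and (b) the asymptotic consequences for the partial sums. For (a) I would simply appeal to the characterization theorem of Cadena, Kratz and Omey \cite{CKO} with $w(x)=x$, which is precisely what the statement attributes to them, so no new work is required for this half.

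For (b), the starting point is to unfold the definition of $\preceq$: from $A(n)\preceq a_n\preceq B(n)$ there exist constants $C_1,C_2>0$ and an integer $N$ such that $C_1 A(n)\le a_n\le C_2 B(n)$ for all $n\ge N$. In case $(i)$, with $\theta>-1$, I would sum from $N$ to $n$ and apply Karamata's theorem to the regularly varying sequences $A$ and $B$, getting $\sum_{i=N}^{n}A(i)\sim nA(n)/(\theta+1)$ and likewise for $B$. The lower bound $\sum_{i=1}^{n}a_i\ge C_1\sum_{i=N}^{n}A(i)$ then yields $nA(n)\preceq \sum_{i=1}^{n}a_i$, while the upper bound $\sum_{i=1}^{n}a_i\le \sum_{i=1}^{N-1}a_i + C_2\sum_{i=N}^{n}B(i)$ yields $\sum_{i=1}^{n}a_i\preceq nB(n)$, the initial-segment constant being absorbed because $nB(n)\in RV_{1+\theta}$ with $1+\theta>0$ and hence $nB(n)\to\infty$. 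Divergence of the series is immediate from the same fact. Case $(ii)$ is handled symmetrically with the tail form of Karamata's theorem: for $\theta<-1$ one has $\sum_{i=n}^{\infty}A(i)\sim -nA(n)/(\theta+1)$, and the same sandwich argument gives finiteness of the series together with $nA(n)\preceq \sum_{i=n}^{\infty}a_i\preceq nB(n)$.

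For the logarithmic conclusions, I would use the elementary property that if $R\in RV_{\alpha}$ then $\ln R(n)/\ln n\to \alpha$; this follows by writing $R(x)=x^{\alpha}L(x)$ with $L$ slowly varying and invoking $\ln L(x)/\ln x\to 0$. Applied to $nA(n)$ and $nB(n)$, both of index $1+\theta$, this gives $\ln(nA(n))/\ln n\to 1+\theta$ and $\ln(nB(n))/\ln n\to 1+\theta$, and then the sandwich $nA(n)\preceq \sum a_i\preceq nB(n)$ forces $\ln(\sum a_i)/\ln n\to \theta+1$ in both cases.

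The main obstacle is bookkeeping rather than conceptual: because $\preceq$ is a $\limsup$-statement, not an asymptotic equivalence, one has to check that the multiplicative constants $C_1,C_2$ and the finite tail $\sum_{i=1}^{N-1}a_i$ are truly absorbed when passing from $a_n$ to the partial or tail sums. This is taken care of by the transitivity of $\preceq$ recorded in the excerpt and by the fact that regularly varying functions of nonzero index tend to $0$ or $\infty$, making the truncation negligible in the respective $\limsup$.
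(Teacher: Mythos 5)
Your proposal is correct and follows essentially the same route as the paper: the equivalence is delegated to Cadena--Kratz--Omey exactly as the authors do, and the partial/tail sum estimates come from sandwiching $a_n$ between constant multiples of $A(n)$ and $B(n)$ and applying the direct half of Karamata's theorem, which is precisely the argument the paper carries out (for general $w$) in the proof of Theorem \ref{thm5}. Your care with the $\limsup$ semantics of $\preceq$, the truncation at $N$, and the absorption of the initial segment via $nB(n)\to\infty$ covers the only delicate points, so nothing is missing.
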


This test about convergence/divergence of the series $\sum a_{i}$ is
sometimes called Cauchy's second test \cite{CKO}. It was re-invented, for example,
in Rao \cite{Rao}. Here in Lemma \ref{lem2} the asymptotic estimates for the partial sums are added.

\section{An extension}\label{S3}

We reconsider \eqref{3} for a general type of the functions $w(x)$. We make the
following assumptions:
\begin{enumerate}
\item [$(a)$] $w(x)\uparrow \infty $ is strictly increasing; the inverse of $w(x)$ is
denoted by $w^{\mathrm{i}}(x)$.

\item[$(b)$] $\forall y$ we have $\lim_{x\rightarrow \infty }w(x+y)/w(x)=1$.
\end{enumerate}

In the sequel we shall assume that $w(x)$ satisfies these
assumptions. Under the assumption that \eqref{3} holds we have the following new result.

\begin{prop}\label{prop3}
 We take $f(x)=a_{\left[ x\right] }$. The following are equivalent:

\begin{enumerate}
\item [$(i)$]%
\begin{equation}
\lim_{n\rightarrow \infty }\frac{\ln a_{n}}{\ln w(n)}=\theta \text{,}
\label{4}
\end{equation}

\item [$(ii)$] There exist functions $A(x),B(x)\in RV_{\theta }$ such that
\begin{equation}
A(w(n))\preceq a_{n}\preceq B(w(n))\text{.}  \label{5}
\end{equation}
\end{enumerate}
\end{prop}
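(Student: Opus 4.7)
My plan is to prove the two directions separately. Direction $(ii) \Rightarrow (i)$ is a short computation with logarithms. For direction $(i) \Rightarrow (ii)$, the idea is to reduce to the case $w(x) = x$ treated in Lemma~\ref{lem2} by the change of variable $y = w(n)$, i.e.\ by viewing the sequence indirectly through $w^{\mathrm{i}}$.

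For $(ii) \Rightarrow (i)$, I would unfold the partial order in \eqref{5}: there exist constants $c_1, c_2 > 0$ and an index $N$ such that $c_1 A(w(n)) \leq a_n \leq c_2 B(w(n))$ for all $n \geq N$. Taking logarithms and dividing by $\ln w(n) \to \infty$ gives
\[
\frac{\ln A(w(n))}{\ln w(n)} + o(1) \;\leq\; \frac{\ln a_n}{\ln w(n)} \;\leq\; \frac{\ln B(w(n))}{\ln w(n)} + o(1).
\]
Since any $g \in RV_\theta$ satisfies $\ln g(x)/\ln x \to \theta$, both flanking quotients tend to $\theta$, and \eqref{4} follows.

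For $(i) \Rightarrow (ii)$, I would introduce the step function $h(y) = a_{[w^{\mathrm{i}}(y)]}$, defined for $y$ large enough that $w^{\mathrm{i}}(y) \geq 1$. The key intermediate claim is
\[
\lim_{y \to \infty} \frac{\ln h(y)}{\ln y} = \theta.
\]
Indeed, setting $k = [w^{\mathrm{i}}(y)]$, one has $w(k) \leq y < w(k+1)$. Assumption~(b) with increment $1$ forces $w(k+1)/w(k) \to 1$, hence $\ln w(k+1) - \ln w(k) \to 0$, so that $\ln y/\ln w(k) \to 1$. Combining with \eqref{4},
\[
\frac{\ln h(y)}{\ln y} = \frac{\ln a_k}{\ln w(k)} \cdot \frac{\ln w(k)}{\ln y} \;\longrightarrow\; \theta.
\]
The functional form of the CKO characterization of \cite{CKO} (of which Lemma~\ref{lem2} is the specialization with $w(x)=x$) then supplies $A, B \in RV_\theta$ with $A(y) \preceq h(y) \preceq B(y)$. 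Substituting $y = w(n)$ for integer $n$ gives $h(w(n)) = a_{[w^{\mathrm{i}}(w(n))]} = a_n$, so \eqref{5} follows at once.

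I expect the main obstacle to be the intermediate claim $\ln h(y)/\ln y \to \theta$, which is precisely where assumption~(b) is indispensable: without it the jumps between consecutive values $w(k)$ and $w(k+1)$ could be arbitrarily large on the logarithmic scale, decoupling $\ln y$ from $\ln w(k)$ and destroying the reduction. A secondary point to be careful about is that one must invoke the \emph{functional} CKO theorem rather than Lemma~\ref{lem2} applied to the sampled sequence $(h(n))$, because the jumps of $h$ occur at the real points $w(k)$, which need not be integers, so the sequence version would not in general recover $a_n$ at the substitution $y = w(n)$.
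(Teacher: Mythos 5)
Your proof is correct and follows essentially the same route as the paper's: the converse direction via the property $\ln U(x)/\ln x\to\alpha$ for $U\in RV_{\alpha}$, and the forward direction by substituting $w^{\mathrm{i}}$ (your $h(y)=a_{[w^{\mathrm{i}}(y)]}$ is exactly the paper's $f(w^{\mathrm{i}}(y))$) and invoking the functional characterization of Cadena, Kratz and Omey. Your explicit verification that assumption $(b)$ yields $\ln y/\ln w(k)\to 1$ merely spells out a step the paper leaves implicit.
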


\begin{proof}
Using $f(x)=a_{\left[ x\right] }$ we see that \eqref{4} holds if and only if \\
$\ln f(x)/\ln
w(x)\rightarrow \theta $. Replacing $x$ by $w^{\mathrm{i}}(x)$ it follows that%
\[
\lim_{x\rightarrow \infty }\frac{\ln f(w^{\mathrm{i}}(x))}{\ln x}=\theta \text{.}
\]%
As in Lemma \ref{lem2}, from Cadena, Kratz and Omey \cite{CKO}, we obtain $A(x)\preceq
f(w^{\mathrm{i}}(x))\preceq B(x)$ with $A,B\in RV_{\theta }$, and \eqref{5} follows.
Starting from \eqref{5} we use a property of regular variation: If $U(x)\in
RV_{\alpha }$, then $\ln U(x)/\ln x\rightarrow \alpha $, see \cite{Bingham}, to obtain
\eqref{4}.
\end{proof}

Now we reconsider \eqref{2} for general $w(n)$. Since the requirement is stronger
than \eqref{4}, we obtain the stronger result. The result has been stated and proved in
\cite{Rehak1}, but we provide an alternative proof that has the advantage that it can be easily extended in order to
obtain one-sided results given in the concluding remarks.

\begin{prop}\label{prop4}
Assume that \eqref{2} holds, and let $f(x)=a_{\left[ x\right] }$. Then $f(x)$
can be presented in the form $f(x)=h(w(x))$, where $h(x)$ is regularly varying with index $%
\theta $, and the following representation holds:%
\[
f(x)=a(x)\exp \int_{a}^{w(x)}\lambda (y)\frac{1}{y}\mathrm{d}y\text{, }x\geq a\text{,}
\]%
in which $a(x)\rightarrow a>0$, and $\lambda (x)\rightarrow \theta $, as $%
x\rightarrow \infty $.
\end{prop}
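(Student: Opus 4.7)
My plan is to make the decomposition $f(x) = h(w(x))$ tautological by defining $h(y) := f(w^{\mathrm{i}}(y))$, and then to produce an explicit Karamata-type representation of $f$ directly from hypothesis \eqref{2}. Once such a representation is in hand, it immediately gives the integral form in the statement, and by composition with $w^{\mathrm{i}}$ it displays $h$ in the canonical Karamata form, from which $h \in RV_{\theta}$ follows by the converse direction of Karamata's representation theorem.

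For the explicit representation, I would first set
\[
\lambda_n := \frac{\Delta \ln a_n}{\Delta \ln w(n)},
\]
so that \eqref{2} reads $\lambda_n \to \theta$. Writing $\Delta \ln w(n) = \int_{w(n)}^{w(n+1)} dy/y$ and telescoping $\ln a_n - \ln a_N$ then yields
\[
\ln a_n - \ln a_N = \sum_{k=N}^{n-1} \lambda_k \int_{w(k)}^{w(k+1)} \frac{dy}{y} = \int_{w(N)}^{w(n)} \widetilde{\lambda}(y) \frac{dy}{y},
\]
where $\widetilde{\lambda}(y) := \lambda_k$ for $y \in [w(k), w(k+1))$. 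Because $w(k) \uparrow \infty$ by assumption $(a)$ and $\lambda_k \to \theta$, we have $\widetilde{\lambda}(y) \to \theta$ as $y \to \infty$.

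To pass from integers to an arbitrary real $x$, I would absorb the gap between $w([x])$ and $w(x)$ into the prefactor. Setting
\[
a(x) := a_N \exp\!\left(-\int_{w([x])}^{w(x)} \widetilde{\lambda}(y) \frac{dy}{y}\right),
\]
the identity $f(x) = a_{[x]}$ gives
\[
f(x) = a(x) \exp \int_{w(N)}^{w(x)} \widetilde{\lambda}(y) \frac{dy}{y}.
\]
Assumption $(b)$ forces $w(x)/w([x]) \to 1$ (since $x - [x] \in [0,1)$), and $\widetilde{\lambda}$ is bounded, so the corrective integral tends to $0$ and $a(x) \to a_N > 0$. Relabelling $a := w(N)$ and $\lambda := \widetilde{\lambda}$ yields exactly the stated representation, and the composition $h = f \circ w^{\mathrm{i}}$ then exhibits $h$ in Karamata form with index $\theta$.

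The main obstacle is precisely the control of the corrective integral $\int_{w([x])}^{w(x)} \widetilde{\lambda}(y)\, dy/y$; this is the only place where assumption $(b)$ is genuinely used, and it is what forces $a(x)$ to have a strictly positive limit rather than merely being bounded. A pleasant side benefit, and presumably the reason the authors prefer this argument to \v{R}eh\'ak's, is that the construction of $\widetilde{\lambda}$ as a piecewise-constant average of the ratios $\lambda_k$ is completely insensitive to the two-sidedness of the limit in \eqref{2}: replacing the limit by a $\limsup$ or $\liminf$ transfers directly to $\widetilde{\lambda}$ and hence to the exponent in the representation, which is exactly the mechanism needed for the one-sided results announced for the concluding section.
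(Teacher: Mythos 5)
Your proof is correct, but it runs the paper's argument in the opposite direction. The paper first proves that $f(w^{\mathrm{i}}(x))$ is regularly varying of index $\theta$ via an $\epsilon$-sandwich: it bounds $\ln(a_{M}/a_{N})$ between $(\theta\pm\epsilon)\ln(w(M)/w(N))$, uses assumption $(b)$ to replace $w([y])$, $w([x])$ by $w(y)$, $w(x)$ at the cost of an extra $\epsilon$, sets $y=tx$, and concludes $f(w^{\mathrm{i}}(tx))/f(w^{\mathrm{i}}(x))\rightarrow t^{\theta}$; the integral form is then obtained by invoking the nontrivial (existence) direction of the Karamata representation theorem. You instead construct the representation explicitly, taking $\lambda$ to be the step function equal to $\Delta\ln a_{k}/\Delta\ln w(k)$ on $[w(k),w(k+1))$, so that the telescoped sum becomes exactly $\int \lambda(y)\,\mathrm{d}y/y$, and you then read off $h=f\circ w^{\mathrm{i}}\in RV_{\theta}$ from the elementary converse of the representation theorem. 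Your route is more self-contained (no appeal to the existence half of the representation theorem) and yields explicit formulas for $\lambda$ and $a(x)$, which the paper's proof does not; the price is the bookkeeping step of showing $\int_{w([x])}^{w(x)}\lambda(y)\,\mathrm{d}y/y\rightarrow 0$, which you handle correctly from assumption $(b)$ and the boundedness of the convergent sequence of ratios. Both arguments confine the use of the two-sided limit to a single spot, so both adapt to the one-sided statements of the concluding section; your closing observation to that effect matches the authors' stated motivation for giving their own proof.
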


\begin{proof}
We start from \eqref{2} and write
\[
\ln \left(\frac{a_{n+1}}{a_{n}}\right)=\theta (n)\ln \left(\frac{w(n+1)}{w(n)}\right),
\]%
where $\theta (n)\rightarrow \theta $ as $n\rightarrow \infty $. For $%
\epsilon >0$, we choose $n%
{{}^\circ}%
$ so that $\theta -\epsilon \leq \theta (n)\leq \theta +\epsilon$, $\forall
n\geq n%
{{}^\circ}%
$. Taking sums, we find that for $M>N\geq n%
{{}^\circ}%
$,
\[
(\theta -\epsilon )\sum_{i=N}^{M-1}\ln \left(\frac{w(i+1)}{w(i)}\right)\leq
\sum_{i=N}^{M-1}\ln \left(\frac{a_{i+1}}{a_{i}}\right)\leq (\theta +\epsilon
)\sum_{i=N}^{M-1}\ln \left(\frac{w(i+1)}{w(i)}\right),
\]%
or%
\[
(\theta -\epsilon )\ln \left(\frac{w(M)}{w(N)}\right)\leq \ln \left(\frac{a_{M}}{a_{N}}\right)\leq
(\theta +\epsilon )\ln \left(\frac{w(M)}{w(N)}\right).
\]%
Using $f(x)=a_{\left[ x\right] }$, we find that for $y>x>n%
{{}^\circ}%
$,
\[
(\theta -\epsilon )\ln \left(\frac{w(\left[ y\right] )}{w(\left[ x\right] )}\right)\leq
\ln \left(\frac{f(y)}{f(x)}\right)\leq (\theta +\epsilon )\ln\left( \frac{w(\left[ y\right] )}{%
w(\left[ x\right] )}\right).
\]%
We continue with the inequality on the right hand side of this expression.
It follows that
\[
\ln \left(\frac{f(y)}{f(x)}\right)\leq (\theta +\epsilon )\ln \left(\frac{w(y)}{w(x)}\right)+(\theta
+\epsilon )\ln \left(\frac{w(\left[ y\right] )w(x)}{w(\left[ x\right] )w(y)}\right).
\]%
For $x,y$ sufficiently large, we obtain
\[
\ln \left(\frac{f(y)}{f(x)}\right)\leq \epsilon +(\theta +\epsilon )\ln \left(\frac{w(y)}{w(x)}%
\right),
\]%
or equivalently%
\[
\ln \left(\frac{f(w^{\mathrm{i}}(y))}{f(w^{\mathrm{i}}(x))}\right)\leq \epsilon +(\theta +\epsilon )\ln \left(%
\frac{y}{x}\right).
\]

Now we fix $t>1$ and replace $y$ by $y=xt$. For $x$ sufficiently large, we
find%
\[
\ln \left(\frac{f(w^{\mathrm{i}}(tx))}{f(w^{\mathrm{i}}(x))}\right)\leq \epsilon +(\theta +\epsilon )\ln t%
\text{.}
\]%
In a similar way we also obtain%
\[
-\epsilon +(\theta -\epsilon )\ln t\leq \ln \left(\frac{f(w^{\mathrm{i}}(tx))}{f(w^{\mathrm{i}}(x))}\right)%
\text{.}
\]%
Since $\epsilon $ is arbitrary, we conclude that%
\[
\lim_{x\rightarrow \infty }\ln \left(\frac{f(w^{\mathrm{i}}(tx))}{f(w^{\mathrm{i}}(x))}\right)=\theta \ln
t.
\]%
It follows that $f(w^{\mathrm{i}}(x))$ is regularly varying with index $\theta $. The
representation theorem in \cite{Bingham} finalizes the proof of the result.
\end{proof}

\begin{rem}\label{rem4.5}
 Assume that $a_{n}=f(w(n))$, where $f(x)$ is a normalized regularly
varying function, i.e. $f(x)$ satisfies $xf^{\prime }(x)/f(x)\rightarrow
\theta $ as $x\rightarrow \infty $. In this case we have $\bigtriangleup \ln
a_{n}=\ln f(w(n+1))-\ln f(w(n))$. Since $(\ln f(x))^{\prime }=f^{\prime
}(x)/f(x)$, the mean value theorem yields%
\[
\bigtriangleup \ln a_{n}=\frac{f^{\prime }(\alpha _{n})}{f(\alpha _{n})}%
(w(n+1)-w(n)),
\]%
where $w(n)\leq \alpha _{n}\leq w(n+1)$. Since $w(n+1)\sim w(n)$, we have $%
\alpha _{n}\sim w(n)$ and it follows that%
\[
\bigtriangleup \ln a_{n}=\frac{\alpha _{n}f^{\prime }(\alpha _{n})}{f(\alpha
_{n})}\left(\frac{w(n+1)}{w(n)}-1\right)\frac{w(n)}{\alpha _{n}}.
\]%
Using $\bigtriangleup \ln w(n)\sim (w(n+1)/w(n)-1)$, we conclude that \eqref{2}
holds.
\end{rem}

Now we generalize Lemma \ref{lem2} as follows. The following test is new and to
our knowledge has not been stated yet.

\begin{thm}\label{thm5}
Assume that%
\begin{equation}
\lim_{n\rightarrow \infty }\frac{\ln (a_{n}/\bigtriangleup w(n))}{\ln w(n)}%
=\theta \text{.}  \label{6}
\end{equation}%
Then there exist functions $A(x),B(x)\in RV_{\theta }$ so that the following
holds:
\begin{enumerate}
\item [$(i)$] If $\theta <-1$, then $\sum_{i=1}^{\infty }a_{i}<\infty $,\\ $%
w(n)A(w(n))\preceq \sum_{i=n}^{\infty }a_{i}\preceq w(n)B(w(n))$, and%
\[
\lim_{n\rightarrow \infty }\frac{\ln (\sum_{i=n}^{\infty }a_{i})}{\ln w(n)}%
=\theta +1\text{.}
\]

\item [$(ii)$] If $\theta >-1$, then $\sum_{i=1}^{\infty }a_{i}=\infty $,\\ $%
w(n)A(w(n))\preceq \sum_{i=1}^{n}a_{i}\preceq w(n)B(w(n))$, and%
\[
\lim_{n\rightarrow \infty }\frac{\ln (\sum_{i=1}^{n}a_{i})}{\ln w(n)}=\theta +1%
\text{.}
\]
\end{enumerate}
\end{thm}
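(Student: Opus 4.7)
The plan is to reduce Theorem~\ref{thm5} to Proposition~\ref{prop3} applied to the auxiliary sequence
\[
b_n := \frac{a_n}{\bigtriangleup w(n)},
\]
and then to convert the resulting partial sums into integrals so that Karamata's theorem for integrals of regularly varying functions becomes available. The hypothesis \eqref{6} is literally $\ln b_n/\ln w(n) \to \theta$, and since $a_n>0$ and $w$ is strictly increasing we have $b_n>0$. Proposition~\ref{prop3} therefore furnishes $A, B \in RV_{\theta}$ with $A(w(n)) \preceq b_n \preceq B(w(n))$, which, after multiplying by $\bigtriangleup w(n)$, gives the termwise sandwich
\[
A(w(n))\,\bigtriangleup w(n) \;\preceq\; a_n \;\preceq\; B(w(n))\,\bigtriangleup w(n).
\]

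Next, I would bracket the Riemann-type sums $\sum_{i=N}^{M}B(w(i))\,\bigtriangleup w(i)$ by the integrals $\int_{w(N)}^{w(M+1)} B(u)\,du$, and symmetrically for $A$. Assumption~(b) implies $w(i+1)/w(i)\to 1$, so the partition $\{w(i)\}$ is multiplicatively fine at infinity, and the Potter (uniform-convergence) bounds for $RV_{\theta}$ ensure that the oscillation of $B$ across each interval $[w(i),w(i+1)]$ tends to zero. Hence
\[
\sum_{i=N}^{M} B(w(i))\,\bigtriangleup w(i) \;\asymp\; \int_{w(N)}^{w(M+1)} B(u)\,du,
\]
and analogously for $A$. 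Karamata's theorem then gives $\int_{1}^{w(n)} B(u)\,du \asymp w(n)B(w(n))$ when $\theta>-1$ and $\int_{w(n)}^{\infty} B(u)\,du \asymp w(n)B(w(n))$ when $\theta<-1$, with the same statements holding for $A$.

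Combining the two steps, case~(ii) produces the divergence of $\sum a_i$ together with the required sandwich $w(n)A(w(n)) \preceq \sum_{i=1}^{n} a_i \preceq w(n)B(w(n))$, while case~(i) yields convergence and the analogous tail sandwich. The logarithmic limit $\ln(\cdot)/\ln w(n)\to\theta+1$ then drops out at once, because both $xA(x)$ and $xB(x)$ belong to $RV_{\theta+1}$, so their logarithms are asymptotic to $(\theta+1)\ln x$ and the sandwich transfers this asymptotic to the partial sums. The main obstacle I foresee is the sum-to-integral comparison in the second step: the partition $\{w(i)\}$ is non-uniform and $A,B$ are not a priori monotone, so making the estimate rigorous requires the Potter bounds for regular variation together with the near-identity behaviour of $w(i+1)/w(i)$ furnished by~(b); everything else is a packaging of Proposition~\ref{prop3} and Karamata.
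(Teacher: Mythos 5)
Your proposal follows essentially the same route as the paper's own proof: apply Proposition~\ref{prop3} to $b_n=a_n/\bigtriangleup w(n)$, convert the termwise sandwich $A(w(n))\bigtriangleup w(n)\preceq a_n\preceq B(w(n))\bigtriangleup w(n)$ into integral bounds over $[w(n),w(n+1)]$ using regular variation together with $w(n+1)\sim w(n)$, and then sum and invoke Karamata's theorem in the convergent and divergent cases. The only difference is cosmetic: you make explicit (via the uniform convergence/Potter bounds) the sum-to-integral comparison that the paper dispatches in one line, and you spell out why the sandwich yields the logarithmic limit $\theta+1$; both steps are correct.
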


\begin{proof}
From Proposition \ref{prop3} and \eqref{6} we have
\[
A(w(n))\preceq \frac{a_{n}}{\bigtriangleup w(n)}\preceq B(w(n))\text{,}
\]%
where $A,B\in RV_{\theta }$. It follows that $\bigtriangleup
w(n)A(w(n))\preceq a_{n}\preceq \bigtriangleup w(n)B(w(n))$. Using the
regular variation of $A$ and $B$ and using $w(n+1)\sim w(n)$, we find that%
\[
\int_{w(n)}^{w(n+1)}A(z)\mathrm{d}z\preceq a_{n}\preceq \int_{w(n)}^{w(n+1)}B(z)\mathrm{d}z%
\text{.}
\]

Now first assume that $\theta <-1$. In this case $\int_{b}^{\infty
}A(z)\mathrm{d}z+\int_{b}^{\infty }B(z)\mathrm{d}z<\infty $, and
\[
\int_{x}^{\infty }A(z)\mathrm{d}z\sim -\frac{xA(x)}{\theta +1}\text{, }%
\int_{x}^{\infty }B(z)\mathrm{d}z\sim -\frac{xB(x)}{\theta +1}\text{.}
\]%
It follows that $\sum_{i=i%
{{}^\circ}%
}^{\infty }a_{i}<\infty $ and%
\[
\int_{w(n)}^{\infty }A(z)\mathrm{d}z\preceq \sum_{i=n}^{\infty }a_{i}\preceq
\int_{w(n)}^{\infty }B(z)\mathrm{d}z\text{,}
\]%
so that%
\[
w(n)A(w(n))\preceq \sum_{i=n}^{\infty }a_{i}\preceq w(n)B(w(n))\text{.}
\]

If $\theta >-1$, we have
\[
\int_{b}^{x}A(z)\mathrm{d}z\sim \frac{xA(x)}{\theta +1}\text{, }\int_{b}^{x}B(z)\mathrm{d}z%
\sim \frac{xB(x)}{\theta +1},
\]%
and now it follows that
\[
w(n)A(w(n))\preceq \sum_{i=1}^{n}a_{i}\preceq w(n)B(w(n))\text{.}
\]%
This proves the result.
\end{proof}

\begin{rem}\label{rem1}
Theorem \ref{5} not only provides conditions for convergence and divergence, but
also provides estimates for the partial sums.
\end{rem}
\begin{rem}
In Bourchtein et al. \cite{BBNV}, the authors consider a function $F(x)>0$ so
that $F^{\prime }(x)>0$ is nonincreasing and $\sum_{i=1}^{\infty }F^{\prime
}(i)=\infty $. Then the authors consider sequences $(a_{n})$ of positive
numbers so that the limit
\[
\lim_{n\rightarrow \infty }\frac{\ln (a_{n}/F^{\prime }(n))}{\ln F(n)}%
=\theta
\]%
exists. The conclusions about convergence or divergence of $\sum a_{i}$ are the
same as in Theorem \ref{thm5}.
\end{rem}

\begin{exm}
 Take $w(n)=\ln n$. We have%
\[
w(n+1)-w(n)=\ln \left(1+\frac{1}{n}\right)=\frac{1}{n}-\frac{1}{2n^{2}}+o\left(\frac{1}{n^2}\right)\text{.%
}
\]%
Assumption \eqref{6} in this case is%
\[
\lim_{n\rightarrow \infty }\frac{\ln (a_{n}/\ln (1+1/n))}{\ln \ln n}=\theta
\text{.}
\]%
We have $\ln (a_{n}/\ln (1+1/n))=\ln (na_{n})-\ln n\ln (1+1/n))$. Now note
that we have the following expansion:
\[
\ln \left(n\ln \left(1+\frac{1}{n}\right)\right)=\ln \left(1-\frac{1}{2n^{2}}+o\left(\frac{1}{n^2}\right)\right)=O\left(\frac{1}{n^2}\right).
\]%
Hence the condition can be simplified and given by%
\[
\lim_{n\rightarrow \infty }\frac{\ln (na_{n})}{\ln \ln n}=\theta \text{.}
\]
\end{exm}

\begin{exm}
We study the sequence $a_{n}=(\ln n)^{\theta }/n$. In this case \eqref{2} leads
to $\ln a_{n}/\ln n\rightarrow -1$, and we can not decide about
convergence or divergence of $\sum a_{n}$. Using the new test, we have $%
\ln (na_{n})=\theta \ln \ln n$, and we have convergence/divergence depending
on $\theta <-1$ resp. $\theta >-1$.
\end{exm}

\begin{exm}
Taking $w(n)=\ln (\ln n)$, we find that \eqref{6} leads to%
\[
\lim_{n\to\infty} \frac{\ln ((n\ln n)a_{n})}{\ln (\ln (\ln n))}=\theta \text{,}
\]%
and we have convergence/divergence of the series when $\theta <-1$, resp. $%
\theta >-1$.
It is not hard to extend this, cf.\ Martin \cite{Martin}.
\end{exm}

Using Proposition \ref{prop4}, we have the following theorem presented below. The main point of the
next theorem is that it not only provides a condition to conclude
convergence or divergence, but also gives information about the rate at
which this happens. This result is also available in \cite{Rehak2}.

\begin{thm}\label{thm6}
Let $b_{n}=a_{n}/\bigtriangleup w(n)$ and assume that
\[
\lim_{n\rightarrow \infty }\frac{\bigtriangleup \ln b_{n}}{\bigtriangleup
\ln w(n)}=\theta \text{.}
\]

\begin{enumerate}
\item [$(i)$] If $\theta >-1$, then $\sum_{i=1}^{\infty }a_{i}=\infty $, and%
\[
\sum_{i=1}^{n}a_{i}\sim \frac{1}{1+\theta }w(n)h(w(n))\sim \frac{1}{1+\theta }%
\cdot\frac{w(n)}{w(n+1)-w(n)}a_{n}\text{.}
\]

\item [$(ii)$] If $\theta <-1$, then $\sum_{i=1}^{\infty }a_{i}<\infty $, and%
\[
\sum_{i=n}^{\infty }a_{i}\sim -\frac{1}{1+\theta }w(n)h(w(n))\sim \frac{-1}{%
1+\theta }\cdot\frac{w(n)}{w(n+1)-w(n)}a_{n}\text{.}
\]
\end{enumerate}
\end{thm}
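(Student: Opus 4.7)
The plan is to upgrade the two-sided preceq estimates of Theorem \ref{thm5} to genuine asymptotic equivalences by leveraging the exact representation provided by Proposition \ref{prop4}. First I would apply Proposition \ref{prop4} to the sequence $(b_n)$ directly: the hypothesis $\triangle\ln b_n/\triangle\ln w(n)\to\theta$ is precisely the assumption \eqref{2} with $a_n$ replaced by $b_n$, so Proposition \ref{prop4} yields a function $h\in RV_\theta$ with $b_n=h(w(n))$. Consequently
\[
a_n=b_n\,\triangle w(n)=h(w(n))\bigl(w(n+1)-w(n)\bigr),
\]
which already suggests interpreting $\sum a_i$ as a Riemann-type sum for $\int h$ over the partition induced by $w$.

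The key step is to show that this Riemann-type sum is asymptotically equivalent to the integral. Because $w(n+1)/w(n)\to 1$ by assumption~(b) on $w$, the uniform convergence theorem for regularly varying functions (Bingham--Goldie--Teugels \cite{Bingham}) implies that $h(z)/h(w(i))\to 1$ uniformly for $z\in[w(i),w(i+1)]$. Fixing $\epsilon>0$ and choosing $N$ large enough, this gives
\[
(1-\epsilon)\,h(w(i))\bigl(w(i+1)-w(i)\bigr)\le\int_{w(i)}^{w(i+1)}h(z)\,\mathrm{d}z\le(1+\epsilon)\,h(w(i))\bigl(w(i+1)-w(i)\bigr)
\]
for every $i\ge N$. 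Summing over $N\le i\le n$ yields
\[
\sum_{i=N}^{n}a_i\;\sim\;\int_{w(N)}^{w(n+1)}h(z)\,\mathrm{d}z.
\]

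The last step is Karamata's theorem applied to $h\in RV_\theta$. In case~(i), $\theta>-1$, so $\int^{x}h(z)\,\mathrm{d}z\sim xh(x)/(1+\theta)\to\infty$, which renders the initial block $\sum_{i<N}a_i$ negligible and yields
\[
\sum_{i=1}^{n}a_i\;\sim\;\frac{w(n+1)\,h(w(n+1))}{1+\theta}\;\sim\;\frac{w(n)\,h(w(n))}{1+\theta},
\]
the last equivalence following from $w(n+1)\sim w(n)$ together with $h(w(n+1))/h(w(n))\to 1$. Substituting $h(w(n))=a_n/\triangle w(n)$ recovers the second form asserted in the theorem. In case~(ii), $\theta<-1$, so $\int_x^{\infty}h(z)\,\mathrm{d}z\sim -xh(x)/(1+\theta)<\infty$; the convergence of $\sum a_i$ is immediate, and the same sandwich/uniform-convergence argument applied to tail sums $\sum_{i\ge n}a_i$ gives the analogous asymptotic with the sign flipped.

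The main obstacle is the sum-to-integral approximation: pointwise asymptotic equivalence does not in general sum to asymptotic equivalence, so the bound must be made uniform from some index $N$ onward before summing, and one must verify afterward that the omitted initial segment is negligible (which reduces to divergence of the integral in case~(i) and to convergence of its tail in case~(ii)). This is exactly what the uniform convergence theorem for regularly varying $h$ together with $w(n+1)/w(n)\to 1$ delivers, and no further properties of $w$ beyond assumptions (a) and (b) are needed.
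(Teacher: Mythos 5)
Your proposal is correct and follows essentially the same route as the paper: apply Proposition \ref{prop4} to $(b_n)$ to write $b_n=h(w(n))$ with $h\in RV_\theta$, convert $\sum a_i$ into $\int h(z)\,\mathrm{d}z$ over the partition induced by $w$, and finish with Karamata's theorem. You merely supply more detail than the paper at the sum-to-integral step (the uniform convergence theorem argument), which the paper compresses into the single assertion $a_n\sim\int_{w(n)}^{w(n+1)}h(z)\,\mathrm{d}z$ followed by ``the result follows from Karamata's theorem.''
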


\begin{proof}
Let $f(x)=b_{\left[ x\right] }$. From Proposition \ref{prop4} we have $%
f(x)=h(w(x))$, where $h(x)\in RV_{\theta }$. Hence,
\[
\frac{a_{n}}{w(n+1)-w(n)}=h(w(n))\text{,}
\]%
so that $a_{n}=(w(n+1)-w(n))h(w(n))$. For $n\rightarrow \infty $, we find
that, as $n\rightarrow \infty $, $a_{n}\sim \int_{w(n)}^{w(n+1)}h(z)\mathrm{d}z$. Now
the result follows from Karamata's theorem.
\end{proof}

\begin{rem}
It is shown in \v{R}eh\'ak \cite{Rehak2} that \eqref{2} is equivalent to Kummer's
test. Compared to Kummer's test, we obtained the explicit
expressions for the partial sums.
\end{rem}

\section{The undecided case $\protect\theta =-1$}\label{S4}

\subsection{Results related to Theorem \ref{thm5}}

Let $\alpha (n)$ be defined as%
\[
\alpha (n)=\frac{\ln (a_{n}/\bigtriangleup w(n))}{\ln w(n)}\text{.}
\]%
If $\alpha (n)\rightarrow \theta =-1$, then Theorem \ref{thm5} doesn't lead to the decision.

We prove three types of results.

a) In the first type of results, we assume that $\alpha (n)+1\rightarrow 0$ at certain rate.
Apparently,%
\[
(\alpha (n)+1)\ln w(n)=\ln \left(\frac{a_{n}}{\bigtriangleup w(n)}\right)+\ln w(n)=\ln
\left(\frac{w(n)a_{n}}{\bigtriangleup w(n)}\right),
\]%
and then
\[
\frac{a_{n}}{\bigtriangleup w(n)w^{\mathrm{i}}(n)}=\exp (\alpha (n)+1)\ln w(n)\text{.%
}
\]

\begin{prop}\label{prop7}
\begin{enumerate}
\item []
\item [$(i)$] Assume that $\exp (\alpha (n)+1)\ln w(n)>B>0$. Then $%
\sum_{i=1}^{\infty }a_{i}=\infty $ and $\sum_{i=a}^{n}a_{i}\succeq \ln w(n)$.

\item [$(ii)$] Assume that $\exp (\alpha (n)+1)\ln w(n)\rightarrow C$ where $%
0<C<\infty $. Then $\sum_{i=1}^{\infty }a_{i}=\infty $ and $%
\sum_{i=1}^{n}a_{i}\sim C\ln w(n)$.
\end{enumerate}
\end{prop}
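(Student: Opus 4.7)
The plan is to reduce both parts to elementary estimates on the telescoping sum $\sum_{i}(w(i+1)-w(i))/w(i)$, which is comparable to $\ln w(n)$. The essential identity, already noted in the displayed line preceding the statement, is
\[
\exp\bigl((\alpha(n)+1)\ln w(n)\bigr)=\frac{w(n)\,a_{n}}{\bigtriangleup w(n)},
\]
so hypothesis $(i)$ reads $a_{n}\ge B\,\bigtriangleup w(n)/w(n)$ eventually, and hypothesis $(ii)$ reads $a_{n}\sim C\,\bigtriangleup w(n)/w(n)$.

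For part $(i)$, I would sum the lower bound and exploit that $w$ is strictly increasing: from $1/u\le 1/w(i)$ on $[w(i),w(i+1)]$ one gets
\[
\frac{w(i+1)-w(i)}{w(i)}\ \ge\ \int_{w(i)}^{w(i+1)}\frac{du}{u}\ =\ \ln w(i+1)-\ln w(i).
\]
Telescoping yields $\sum_{i=a}^{n}(w(i+1)-w(i))/w(i)\ge \ln w(n+1)-\ln w(a)$; since $\ln w(n+1)\sim \ln w(n)$ (from assumption $(b)$ on $w$), we conclude $\sum_{i=a}^{n}a_{i}\succeq \ln w(n)$. Because $w(n)\to\infty$, this also settles divergence.

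For part $(ii)$, the strategy is a double application of the classical principle that, for positive sequences whose partial sums diverge, $a_{n}\sim b_{n}$ implies $\sum_{1}^{n} a_{i}\sim \sum_{1}^{n} b_{i}$ (a direct consequence of the Stolz--Ces\`aro lemma of the kind already used in Section~\ref{S2}). The hypothesis $a_{n}\sim C\,\bigtriangleup w(n)/w(n)$ gives
\[
\sum_{i=1}^{n}a_{i}\ \sim\ C\sum_{i=1}^{n}\frac{w(i+1)-w(i)}{w(i)}.
\]
Next, setting $r_{i}:=w(i+1)/w(i)-1$, assumption $(b)$ forces $r_{i}\to 0$, hence $r_{i}\sim\ln(1+r_{i})$; a second application of the same principle yields
\[
\sum_{i=1}^{n}r_{i}\ \sim\ \sum_{i=1}^{n}\ln\frac{w(i+1)}{w(i)}\ =\ \ln w(n+1)-\ln w(1)\ \sim\ \ln w(n).
\]
Chaining the two equivalences delivers $\sum_{1}^{n}a_{i}\sim C\ln w(n)$.

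The delicate point is the double invocation of the Stolz--Ces\`aro-type principle in part $(ii)$: one must verify a priori that the comparison series $\sum (w(i+1)-w(i))/w(i)$ and $\sum \ln(w(i+1)/w(i))$ actually diverge, since otherwise $a_{n}\sim b_{n}\Rightarrow\sum a_{i}\sim\sum b_{i}$ can fail. Both divergences are supplied by the telescoping estimate of part $(i)$. Beyond this verification, every step is routine.
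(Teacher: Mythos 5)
Your argument is correct and follows essentially the same route as the paper: both parts rest on the identity $\exp((\alpha(n)+1)\ln w(n))=w(n)a_n/\bigtriangleup w(n)$, the comparison of $\bigtriangleup w(n)/w(n)$ with $\int_{w(n)}^{w(n+1)}z^{-1}\,\mathrm{d}z=\ln(w(n+1)/w(n))$, and telescoping. The only difference is that the paper compresses part $(ii)$ into ``the result follows by summation,'' whereas you spell out the required divergence check for the asymptotic-summation step --- a worthwhile precaution, but not a different proof.
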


\begin{proof}
$(i)$\ If $\exp (\alpha (n)+1)\ln w(n)>B>0$ then%
\[
a_{n}\geq B\bigtriangleup w(n)w^{\mathrm{i}}(n)\succeq \int_{w(n)}^{w(n+1)}\frac{1}{z%
}\mathrm{d}z\text{.}
\]%
It follows that $\sum_{i=a}^{N}a_{i}\succeq \int_{b}^{w(N)}z^{-1}\mathrm{d}z$, and hence
$\sum_{i=1}^{N}a_{i}\succeq \ln w(N)$.

$(ii)$ We have%
\[
a_{n}\sim C\bigtriangleup w(n)w^{\mathrm{i}}(n)\sim C\int_{w(n)}^{w(n+1)}\frac{1}{z}dz%
\text{.}
\]%
The result follows by summation.
\end{proof}

\begin{exm}
We study $a_{n}=n^{-1}(\ln n)^{p}$.

Using $w(n)=n$ we have $\bigtriangleup \ln a_{n}/\bigtriangleup \ln
w(n)\rightarrow -1$, that is inconclusive case.

Now we take $w(n)=\ln n$. We find:
\begin{eqnarray*}
\ln (a_{n}/\bigtriangleup w(n)) &=&\ln a_{n}-\ln \bigtriangleup w(n) \\
&=&-\ln n+p\ln \ln n-\ln \left(\ln \left(1+\frac{1}{n}\right)\right) \\
&=&p\ln \ln n-\ln \left(n\ln \left(1+\frac{1}{n}\right)\right) \\
&=&p\ln \ln n-\ln \left(1+\left(n\left(\ln (1+\frac{1}{n}\right)\right)-1\right),
\end{eqnarray*}%
and then%
\[
\ln (a_{n}/\bigtriangleup w(n))-p\ln \ln n\sim n\ln \left(1+\frac{1}{n}\right)-1\sim -%
\frac{1}{2n}.
\]%
We find
\[
\alpha (n)=\frac{\ln (a_{n}/\bigtriangleup w(n))}{\ln w(n)}\rightarrow p%
\text{,}
\]%
and for $p\neq -1$, we can apply Theorem \ref{thm5}.

In the case of $p=-1$, we have
\[
\alpha (n)+1=\frac{\ln (a_{n}/\bigtriangleup w(n))+\ln \ln n}{\ln \ln n}\sim
-\frac{1/2}{n\ln \ln n},
\]%
and%
\[
\ln w(n)(\alpha (n)+1)\sim -\frac{1}{2n}\rightarrow 0\text{.}
\]

Now Proposition \ref{prop7} $(ii)$ (with $C=1$) is applicable, and we arrive at $%
\sum_{i=1}^{n}a_{i}\sim \ln w(n)$.
\end{exm}

\bigskip

b) In the second type of results, we start from
\[
\frac{\ln (a_{n}/\bigtriangleup w(n))}{\ln w(n)}\rightarrow -1,
\]%
making the stronger assumption of existence of the following limit
\begin{equation}
\frac{\ln (a_{n}/\bigtriangleup w(n))+\ln w(n)}{\ln \ln w(n)}=\frac{\ln
(w(n)a_{n}/\bigtriangleup w(n))}{\ln \ln w(n)}\rightarrow \beta \text{.}
\label{7}
\end{equation}

\begin{prop}\label{prop8}
Assume that \eqref{7} holds.
\begin{enumerate}
\item [$(i)$] If $\beta <-1$, then $\sum_{i=1}^{\infty }a_{i}<\infty $ and%
\begin{equation}
\frac{\ln (\sum_{i=n}^{\infty }a_{i})}{\ln \ln w(n)}\rightarrow \beta +1\text{.%
}  \label{8}
\end{equation}

\item [$(ii)$] If $\beta >-1$, then $\sum_{i=1}^{\infty }a_{i}=\infty $ and%
\begin{equation}
\frac{\ln (\sum_{i=1}^{n}a_{i})}{\ln \ln w(n)}\rightarrow \beta +1\text{.}
\label{9}
\end{equation}
\end{enumerate}
\end{prop}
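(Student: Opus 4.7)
The plan is to recognize that \eqref{7} is nothing other than hypothesis \eqref{6} of Theorem \ref{thm5} once the auxiliary function $w$ is replaced by the iterated choice $v(n):=\ln w(n)$ and the index $\theta$ is replaced by $\beta$. Once this identification is made, the two parts of the proposition follow directly from the corresponding parts of Theorem \ref{thm5}.

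First I would check that $v(x)=\ln w(x)$ inherits the two standing hypotheses $(a)$, $(b)$ imposed on auxiliary functions at the start of Section \ref{S3}. Strict monotonicity and $v(x)\uparrow\infty$ are immediate from the corresponding properties of $w$ together with the monotonicity of $\ln$. For the translation property,
\[
\frac{v(x+y)}{v(x)}=1+\frac{\ln(w(x+y)/w(x))}{\ln w(x)}\longrightarrow 1,
\]
since hypothesis $(b)$ for $w$ gives $\ln(w(x+y)/w(x))\to 0$ and $\ln w(x)\to\infty$.

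Next I would translate the numerator of \eqref{7}. Hypothesis $(b)$ yields $\bigtriangleup w(n)/w(n)=w(n+1)/w(n)-1\to 0$, so writing $\bigtriangleup v(n)=\ln(1+\bigtriangleup w(n)/w(n))$ gives $\bigtriangleup v(n)\sim \bigtriangleup w(n)/w(n)$, whence
\[
\ln\left(\frac{a_n}{\bigtriangleup v(n)}\right)-\ln\left(\frac{w(n)\,a_n}{\bigtriangleup w(n)}\right)\longrightarrow 0.
\]
Dividing by $\ln v(n)=\ln\ln w(n)\to\infty$ and invoking \eqref{7} yields
\[
\lim_{n\to\infty}\frac{\ln(a_n/\bigtriangleup v(n))}{\ln v(n)}=\beta,
\]
which is precisely hypothesis \eqref{6} of Theorem \ref{thm5} with auxiliary function $v$ and index $\beta$.

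Finally, applying Theorem \ref{thm5} with $w\mapsto v$ and $\theta\mapsto\beta$ gives the convergence/divergence dichotomy together with the limit relations $\ln(\sum_{i=n}^{\infty}a_i)/\ln v(n)\to \beta+1$ when $\beta<-1$ and $\ln(\sum_{i=1}^{n}a_i)/\ln v(n)\to \beta+1$ when $\beta>-1$; since $\ln v(n)=\ln\ln w(n)$, these are exactly \eqref{8} and \eqref{9}. The only technical point that needs attention is that the $o(1)$ discrepancy arising from the replacement of $\ln(w(n)a_n/\bigtriangleup w(n))$ by $\ln(a_n/\bigtriangleup v(n))$ must not contaminate the limit at the $\ln\ln w(n)$ scale; this is automatic because the error tends to $0$ while the denominator diverges, so no genuine obstacle remains.
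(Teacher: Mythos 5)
Your proof is correct, but it takes a different route from the paper. The paper proves Proposition \ref{prop8} directly, by the same $\epsilon$-sandwich used for Theorem \ref{thm5}: from \eqref{7} it extracts $a_n \preceq \bigtriangleup w(n)\,(\ln w(n))^{\beta+\epsilon}/w(n) \preceq \int_{w(n)}^{w(n+1)}(\ln z)^{\beta+\epsilon}z^{-1}\,\mathrm{d}z$ (and the matching lower bound with $\beta-\epsilon$), then sums and lets $\epsilon\to 0$. You instead observe that \eqref{7} is \eqref{6} for the new weight $v=\ln w$, verify that $v$ inherits the standing assumptions $(a)$ and $(b)$, check that the discrepancy $\ln(a_n/\bigtriangleup v(n))-\ln(w(n)a_n/\bigtriangleup w(n))=o(1)$ is harmless against the divergent denominator $\ln\ln w(n)$, and quote Theorem \ref{thm5}. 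All three verifications are sound: the translation property for $v$ follows exactly as you write it, and $\bigtriangleup v(n)\sim \bigtriangleup w(n)/w(n)$ follows from assumption $(b)$ with $y=1$. Your reduction is arguably the more illuminating argument: it explains why the hierarchy exists at all (Proposition \ref{prop9} is then just the $k$-fold iteration $w\mapsto \ln_{(k)}w$), and it yields for free the sharper two-sided bounds $v(n)A(v(n))\preceq \sum \preceq v(n)B(v(n))$ with $A,B\in RV_{\beta}$ from Theorem \ref{thm5}, not only the logarithmic limits \eqref{8}--\eqref{9}. What the paper's direct proof buys in exchange is self-containedness and the explicit integral comparison that it reuses verbatim in the proof of Proposition \ref{prop9}. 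The only point you should state explicitly (it is implicit in your argument) is that $\ln w(n)>1$ eventually, so that $v(n)>0$ and $\ln v(n)=\ln\ln w(n)$ is eventually positive and tends to infinity; this is automatic from $w(n)\uparrow\infty$ and is already presupposed by the formulation of \eqref{7}.
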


\begin{proof}
Assume that \eqref{7} holds. For $\epsilon >0$ we have
\[
\ln \left(\frac{w(n)a_{n}}{\bigtriangleup w(n)}\right)\leq (\beta +\epsilon )\ln \ln w(n), \quad n\geq
n%
{{}^\circ},%
\]%
and then%
\[
a_{n}\leq \bigtriangleup w(n)w^{\mathrm{i}}(n)(\ln w(n))^{\beta +\epsilon }\preceq
\int_{w(n)}^{w(n+1)}(\ln z)^{\beta +\epsilon }\frac{1}{z}\mathrm{d}z\text{.}
\]%
Similarly we have $$a_{n}\succeq \int_{w(n)}^{w(n+1)}(\ln z)^{\beta -\epsilon
}\frac{1}{z}\mathrm{d}z.$$

If $\beta <-1$, then $\sum_{i=1}^{\infty }a_{i}<\infty $, and $$(\ln
w(n))^{\beta -\epsilon +1}\preceq \sum_{i=n}^{\infty }a_{i}\preceq (\ln
w(n))^{\beta +\epsilon +1}.$$ Relation \eqref{8} follows.

If $\beta >-1$, then $\sum_{i=1}^{\infty }a_{i}=\infty $, and $$(\ln w(n))^{\beta
+1-\epsilon }\preceq \sum_{i=1}^{n}a_{i}\preceq (\ln w(n))^{\beta +1+\epsilon
}. $$ Relation \eqref{9} follows.
\end{proof}

\begin{exm}\label{ex8}

Consider

$$
a_n=\frac{(\ln\ln n)^p}{n\ln n}\textrm{,}
$$
and

$$
w(n)=\ln n\textrm{.}
$$

Note that $\Delta w(n)\sim1/n$, and we then obtain

$$
\frac{\ln(a_n/\Delta w(n))}{\ln\ln n}
=\frac{\ln((\ln\ln n)^p/\ln n)}{\ln\ln n}
=\frac{p\ln\ln\ln n-\ln\ln n}{\ln\ln n}
\to-1,
$$
as $n\to\infty$.
Also we have

$$
\frac{\ln(w(n)a_n/\Delta w(n))}{\ln\ln\ln n}
=\frac{\ln((\ln\ln n)^p)}{\ln\ln\ln n}
\to
p,
$$
as $n\to\infty$.
Hence, by applying Proposition \ref{prop8}, for $p<-1$ we obtain

$$
\sum_{i=1}^{\infty}a_i<\infty,
$$
and

$$
\frac{\ln \left(\sum_{i=n}^{\infty}a_i\right)}{\ln\ln\ln n}\to p+1, \ \text{as} \ n\to\infty.
$$
If $p>-1$, then

$$
\sum_{i=1}^{\infty}a_i=\infty,
$$
and

$$
\frac{\ln \left(\sum_{i=1}^{n}a_i\right)}{\ln\ln\ln n}\to p+1, \ \text{as} \ n\to\infty.
$$
If $p=-1$, we cannot arrive at the conclusion from Proposition \ref{prop8}.
\end{exm}

c) 
We prove a generalization
of an old result of Martin \cite{Martin}. Let $\ln _{(0)}z=z$, $\ln
_{(1)}z=\ln z$ and $\ln _{(k+1)}z=\ln \ln _{(k)}z$ for $k=1,2,...$ Note that for $%
k\geq 0$ we have
\[
(\ln _{(k+1)}(z))^{\prime }=\frac{(\ln _{(k)}(z))^{\prime }}{\ln _{(k)}(z)}=...=%
\frac{1}{z\times \ln _{(1)}z\times \ln _{(2)}z...\times \ln _{(k)}z}\text{.}
\]

If \eqref{7} holds with $\beta =-1$, Proposition \ref{prop8} cannot be used. In this case, we are to
replace \eqref{7} by the stronger assumption%
\[
\lim_{n\rightarrow \infty }\frac{\ln (w(n)a_{n}/\bigtriangleup w(n))-\ln
_{(2)}w(n)}{\ln _{(3)}w(n)}=\frac{\ln (w(n)\ln _{(1)}w(n)a_{n}/\bigtriangleup w(n))%
}{\ln _{(3)}w(n)}=\beta \text{.}
\]

As in Proposition \ref{prop8}, this leads to the case $\beta =-1$, at which we cannot make a decision. In
general, for $k=1,2,...$ we assume%
\[
\lim_{n\rightarrow \infty }\frac{\ln (w(n)\Pi _{i=1}^{k}\ln
_{(i)}w(n)a_{n}/\bigtriangleup w(n))}{\ln _{(k+2)}w(n)}=\beta _{k}\text{,}
\]
and consider the case of $\beta _{1}=\beta _{2}=...=\beta _{k-1}=-1$.

\begin{prop}\label{prop9}
Under the above assumptions we have as follows.
\begin{enumerate}
\item [$(i)$] If $\beta _{k}<-1$, then $\sum_{i=1}^{\infty }a_{i}<\infty $, and
\[
\lim_{n\to\infty}\frac{\ln (\sum_{i=n}^{\infty }a_{i})}{\ln _{(k+2)}w(n)}=\beta _{k}+1\text{.}
\]

\item [$(ii)$] If $\beta _{k}>-1$, then $\sum_{i=1}^{\infty }a_{i}=\infty $, and%
\[
\lim_{n\to\infty}\frac{\ln (\sum_{i=1}^{n}a_{i})}{\ln _{(k+2)}w(n)}=\beta _{k}+1\text{.}
\]

\item [$(iii)$] If $\beta _{k}=-1$, then assumptions $(i)$ and $(ii)$ should be taken for $k+1$, i.e. assumption $\beta _{k}<-1$ should be replaced by $\beta _{k+1}<-1$ and assumption $\beta _{k}>-1$ should be replaced by $\beta _{k+1}>-1$.

\end{enumerate}
\end{prop}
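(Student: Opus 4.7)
The plan is to mimic the proof of Proposition \ref{prop8}, replacing the single-logarithm integrand $(\ln z)^\beta/z$ by the iterated-logarithm integrand
\[
g_\alpha(z)=\frac{(\ln_{(k+1)} z)^\alpha}{z\prod_{i=1}^k \ln_{(i)} z},
\]
whose antiderivative, by the derivative identity recorded just before the statement, is $(\ln_{(k+1)} z)^{\alpha+1}/(\alpha+1)$ whenever $\alpha\ne -1$. This computation is the engine that will yield all three conclusions; only the level-$k$ hypothesis enters the proof, the earlier equalities $\beta_1=\cdots=\beta_{k-1}=-1$ merely explain why Propositions \ref{prop7} and \ref{prop8} were unable to decide.

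First I would exponentiate the hypothesis on $\beta_k$: for every $\epsilon>0$ and $n$ sufficiently large,
\[
(\ln_{(k+1)} w(n))^{\beta_k-\epsilon} \leq \frac{w(n)\prod_{i=1}^k \ln_{(i)} w(n)\cdot a_n}{\bigtriangleup w(n)} \leq (\ln_{(k+1)} w(n))^{\beta_k+\epsilon},
\]
which rearranges to
\[
\bigtriangleup w(n)\,g_{\beta_k-\epsilon}(w(n)) \leq a_n \leq \bigtriangleup w(n)\,g_{\beta_k+\epsilon}(w(n)).
\]
Since hypothesis (b) gives $w(n+1)\sim w(n)$ and the iterated logarithms inherit the same asymptotic, the mean value theorem will yield $\int_{w(n)}^{w(n+1)} g_\alpha(z)\,dz\sim \bigtriangleup w(n)\,g_\alpha(w(n))$, so that
\[
\int_{w(n)}^{w(n+1)} g_{\beta_k-\epsilon}(z)\,dz \preceq a_n \preceq \int_{w(n)}^{w(n+1)} g_{\beta_k+\epsilon}(z)\,dz.
\]

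Next I would sum and split into cases. If $\beta_k<-1$, choose $\epsilon$ so small that $\beta_k+\epsilon<-1$; the antiderivative gives convergent tails with $\int_{w(n)}^\infty g_{\beta_k\pm\epsilon}(z)\,dz\asymp (\ln_{(k+1)} w(n))^{\beta_k\pm\epsilon+1}$, which sandwich $\sum_{i=n}^\infty a_i$. Taking logarithms, dividing by $\ln_{(k+2)} w(n)=\ln(\ln_{(k+1)} w(n))$, and letting $\epsilon\downarrow 0$ yield part $(i)$. If $\beta_k>-1$, the same antiderivative gives $\int_a^{w(n)} g_{\beta_k\pm\epsilon}(z)\,dz\sim (\ln_{(k+1)} w(n))^{\beta_k\pm\epsilon+1}/(\beta_k\pm\epsilon+1)$, divergent at infinity, and the identical bookkeeping yields part $(ii)$. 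Part $(iii)$ is the tautological observation that when $\beta_k=-1$ the level-$k$ hypothesis simply becomes the ``$\beta_k=-1$'' clause in the level-$(k+1)$ hypothesis, so the test is reapplied one step further.

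The step I expect to be most delicate is justifying $\int_{w(n)}^{w(n+1)} g_\alpha(z)\,dz\sim \bigtriangleup w(n)\,g_\alpha(w(n))$: one must check that $w(n+1)/w(n)\to 1$ propagates to $\ln_{(i)} \xi/\ln_{(i)} w(n)\to 1$ uniformly in $\xi\in[w(n),w(n+1)]$ and for each $i=1,\ldots,k+1$. This follows by induction on $i$ from the fact that $\ln_{(i-1)} w(n)\to\infty$, after which the rest of the argument reduces to manipulations within the partial order $\preceq$ exactly as in the proof of Proposition \ref{prop8}.
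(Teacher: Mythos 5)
Your proposal is correct and follows essentially the same route as the paper's proof: exponentiate the level-$k$ hypothesis to sandwich $a_n$ between $\bigtriangleup w(n)$ times the iterated-logarithm integrand at $\beta_k\pm\epsilon$, replace these by the integrals $\int_{w(n)}^{w(n+1)}$ using $w(n+1)\sim w(n)$, sum, and evaluate via the explicit antiderivative $(\ln_{(k+1)}z)^{\alpha+1}/(\alpha+1)$. The only difference is that you spell out the justification of the step $\bigtriangleup w(n)\,g_\alpha(w(n))\asymp\int_{w(n)}^{w(n+1)}g_\alpha(z)\,dz$, which the paper passes over with an ``it follows that''.
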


\begin{proof}
For $\epsilon >0$ we have
\[
\ln \left(\frac{w(n)\Pi _{i=1}^{k}\ln _{(i)}(w(n)a_{n})}{\bigtriangleup w(n)}\right)\leq (\beta
_{k}+\epsilon )\ln \ln _{(k+1)}w(n), \quad n\geq n%
{{}^\circ}%
\text{.}
\]%
It follows that%
\[
\frac{w(n)\Pi _{i=1}^{k}\ln _{(i)}(w(n)a_{n})}{\bigtriangleup w(n)}\leq (\ln
_{(k+1)}w(n))^{\beta _{k}+\epsilon }
\]%
and%
\[
a_{n}\leq \bigtriangleup w(n)\frac{(\ln _{(k+1)}w(n))^{\beta _{k}+\epsilon }}{%
w(n)\Pi _{i=1}^{k}\ln _{(i)}w(n)}\text{.}
\]%
It follows that
\[
a_{n}\preceq \int_{w(n)}^{w(n+1)}\frac{(\ln _{(k+1)}(z))^{\beta _{k}+\epsilon }%
}{z\Pi _{i=1}^{k}\ln _{(i)}z}\mathrm{d}z\text{.}
\]%
Similarly we find
\[
a_{n}\succeq \int_{w(n)}^{w(n+1)}\frac{(\ln _{(k+1)}(z))^{\beta _{k}-\epsilon }%
}{z\Pi _{i=1}^{k}\ln _{(i)}z}\mathrm{d}z\text{.}
\]

Now we consider the case $\beta _{k}<-1$. Using
\[
\int_{q}^{\infty }\frac{(\ln _{(k+1)}(z))^{\beta _{k}+\epsilon }}{z\Pi
_{i=1}^{k}\ln _{(i)}z}\mathrm{d}z=-\frac{(\ln _{(k+1)}(q))^{\beta _{k}+\epsilon +1}}{%
\beta _{k}+\epsilon +1}<\infty \text{,}
\]%
we find that $\sum_{i=1}^{\infty }a_{i}<\infty $, and%
\[
(\ln _{(k+1)}w(n))^{\beta _{k}-\epsilon +1}\preceq \sum_{i=n}^{\infty
}a_{i}\preceq (\ln _{(k+1)}w(n))^{\beta _{k}+\epsilon +1}\text{.}
\]%
Now it follows that
\[
\lim_{n\to\infty}\frac{\ln (\sum_{i=n}^{\infty }a_{i})}{\ln _{(k+2)}w(n)}=\beta _{k}+1\text{.}
\]%
In the case of $\beta _{k}>-1$, we find that $\sum_{i=1}^{\infty
}a_{i}=\infty $, and
\[
(\ln _{(k+1)}w(n))^{\beta _{k}-\epsilon +1}\preceq \sum_{i=1}^{n}a_{i}\preceq
(\ln _{(k+1)}w(n))^{\beta _{k}+\epsilon +1}\text{.}
\]%
Finally, we arrive at
\[
\lim_{n\to\infty}\frac{\ln (\sum_{i=1}^{n}a_{i})}{\ln _{(k+2)}w(n)}=\beta _{k}+1\text{.}
\]
\end{proof}

\begin{exm}
Assume that in Example \ref{ex8} we have $p=-1$.
According to Proposition \ref{prop8}, we cannot conclude on either convergence or divergence of $\sum_{i=1}^{n}a_i$ as $n\to\infty$.
From the above we have

$$
\frac{\ln((\ln n) a_n/\Delta w(n))}{\ln\ln\ln n}
\to-1,
$$
as $n\to\infty$.
Further, for $n>\exp(\exp(\exp(\exp(1))))$ we obtain

$$
\frac{\ln((\ln n)(\ln\ln n) a_n/\Delta w(n))}{\ln\ln\ln\ln n}
=\frac{\ln((\ln\ln n) (\ln\ln n)^{-1})}{\ln\ln\ln\ln n}=\frac{\ln 1}{\ln\ln\ln\ln n}
=0\textrm{.}
$$

Hence, Proposition \ref{prop9} allows us to conclude that

$$
\sum_{i=1}^{\infty}a_i=\infty,
$$
and

$$
\frac{\ln \left(\sum_{i=1}^{n}a_i\right)}{\ln\ln\ln\ln n}\to 0\textrm{,}
$$
as $n\to\infty$.
\end{exm}

\subsection{Results related to Theorem \ref{thm6}}

We define $\alpha (n)$%
\[
\alpha (n)=\frac{\bigtriangleup \ln (a_{n}/\bigtriangleup w(n))}{%
\bigtriangleup \ln w(n)}\text{.}
\]%
If $\alpha (n)\rightarrow \theta =-1$, Theorem \ref{thm6} does not provide information on convergence or divergence.
We provide three types of results.

a) Apparently, $(\alpha (n)+1)\bigtriangleup \ln w(n)=\bigtriangleup \ln
(a_{n}w(n)/\bigtriangleup w(n))$. Then taking the sums $\sum_{a}^{N}$ we obtain%
\[
\ln \left(\frac{w(N)}{\bigtriangleup w(N)}a_{N}\right)=c+\sum_{i=a}^{N}(\alpha (i)+1)\ln
w(i)
\]%
for some constant $c$, and%
\[
\frac{w(n)}{\bigtriangleup w(N)}a_{N}=C\exp \sum_{i=a}^{N}(\alpha (i)+1)\ln
w(i)
\]%
for some constant $C>0$.

\begin{prop}\label{prop10}
\begin{enumerate}
\item []
\item [$(i)$] If $\exp \sum_{i=a}^{N}(\alpha (i)+1)\ln (w(i))\geq B>0$, then $%
\sum_{i=a}^{n}a_{i}\succeq \ln w(n)$.

\item [$(ii)$] If $\exp \sum_{i=a}^{N}(\alpha (i)+1 )\ln (w(i))\rightarrow D$, then
$\sum_{i=1}^{\infty }a_{i}=\infty $, and $\sum_{i=1}^{n}a_{i}\sim E\ln w(n)$ for
some constant $E>0$.
\end{enumerate}
\end{prop}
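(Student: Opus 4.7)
The plan is to read off both conclusions from the explicit representation
\[
\frac{w(N)}{\bigtriangleup w(N)}\,a_N \;=\; C\exp\Bigl(\sum_{i=a}^{N}(\alpha(i)+1)\ln w(i)\Bigr)
\]
derived in the paragraph preceding the proposition. Under each hypothesis, the right-hand side controls the size of $a_N$ in terms of $\bigtriangleup w(N)/w(N)$, and the standing assumption $w(n+1)/w(n)\to 1$ converts the partial sum of that ratio into $\ln w(n)$.

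For part $(i)$, the lower bound on the exponential yields $a_N \succeq \bigtriangleup w(N)/w(N)$. Since $w(n+1)/w(n)\to 1$, one has
\[
\frac{\bigtriangleup w(n)}{w(n)} \;\sim\; \ln\frac{w(n+1)}{w(n)} \;=\; \int_{w(n)}^{w(n+1)}\frac{\mathrm{d}z}{z}.
\]
Summing the resulting inequality $a_i \succeq \int_{w(i)}^{w(i+1)} z^{-1}\,\mathrm{d}z$ over $i=a,\dots,n$ telescopes the integrals into $\ln w(n+1)-\ln w(a)$; since $\ln w(n+1)\sim \ln w(n)$, this yields $\sum_{i=a}^{n} a_i \succeq \ln w(n)$, as claimed.

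For part $(ii)$, the same computation under the strengthened hypothesis gives the asymptotic $a_N \sim CD\,\bigtriangleup w(N)/w(N) \sim CD\int_{w(N)}^{w(N+1)} z^{-1}\,\mathrm{d}z$. Telescoping the integrals in the partial sum then yields $\sum_{i=1}^{n} a_i \sim E\ln w(n)$ with $E:=CD>0$; divergence is automatic because $\ln w(n)\to\infty$. The only delicate point, and the main obstacle in the argument, is promoting the term-by-term asymptotic equivalence $a_i\sim CD\int_{w(i)}^{w(i+1)}z^{-1}\mathrm{d}z$ into asymptotic equivalence of the partial sums. This is standard precisely because both sides diverge: for any $\epsilon>0$ choose $N_{0}$ with $|a_i - CD\int_{w(i)}^{w(i+1)} z^{-1}\,\mathrm{d}z| < \epsilon\int_{w(i)}^{w(i+1)} z^{-1}\,\mathrm{d}z$ for $i\geq N_{0}$, split the sum at $N_{0}$, and observe that the tail piece dominates the fixed head as $n\to\infty$, so the relative error in the partial sums is at most $\epsilon$ eventually.
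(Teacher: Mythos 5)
Your proposal is correct and follows essentially the same route as the paper: both parts are read off from the representation $\frac{w(N)}{\bigtriangleup w(N)}a_N = C\exp\sum(\alpha(i)+1)\ln w(i)$, the ratio $\bigtriangleup w(N)/w(N)$ is replaced by $\int_{w(N)}^{w(N+1)}z^{-1}\,\mathrm{d}z$, and the telescoping sum gives $\ln w(n)$. The only difference is that you spell out the $\epsilon$-argument for summing termwise asymptotic equivalences of a divergent series, which the paper compresses into ``the result follows by summation.''
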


\begin{proof}
$(i)$ If $\exp \sum_{i=a}^{N}(\alpha (i)+1 )\ln w(i)\geq B$, then
\[
a_{N}\succeq \frac{\bigtriangleup w(N)}{w(N)}\succeq
\int_{w(N)}^{w(N+1)}\frac{1}{z}\mathrm{d}z\text{.}
\]%
It follows that $\sum_{i=a}^{n}a_{i}\succeq \int_{w(a)}^{w(n+1)}z^{-1}\mathrm{d}z\asymp
\ln w(n)$, and hence we find that \\
$\sum_{i=a}^{n}a_{i}\succeq \ln w(n)$.

$(ii)$ If $\exp \sum_{i=a}^{N}(\alpha (i)+1 )\ln w(i)\rightarrow D$
(finite), then
\[
\frac{w(N)}{\bigtriangleup w(N)}a_{N}\rightarrow CD:=E\text{.}
\]%
It follows that $a_{N}\sim E\int_{w(N)}^{w(N+1)}z^{-1}\mathrm{d}z$. The result follows by summation.
\end{proof}

\bigskip

b) To obtain the second type of results similar to that is given in \eqref{7}, we assume%
\begin{equation}
\lim_{n\rightarrow \infty }\frac{\bigtriangleup \ln
(w(n)a_{n}/\bigtriangleup w(n))}{\bigtriangleup \ln \ln w(n)}=\beta \text{.}
\label{10}
\end{equation}%

\begin{prop}\label{prop11}
Assume that \eqref{10} holds.
\begin{enumerate}
\item [$(i)$] If $\beta <-1$, then $\sum_{i=1}^{\infty }a_{i}<\infty $ and
\[
\lim_{n\rightarrow \infty }\frac{\ln (\sum_{i=n}^{\infty }a_{i})}{\ln \ln w(n)}%
=\beta +1\text{.}
\]

\item[$(ii)$] If $\beta >-1$, then $\sum_{i=1}^{\infty }a_{i}=\infty $, and%
\[
\lim_{n\rightarrow \infty }\frac{\ln (\sum_{i=1}^{n}a_{i})}{\ln \ln w(n)}%
=\beta +1\text{.}
\]
\end{enumerate}
\end{prop}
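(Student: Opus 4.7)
The plan is to mirror the proof of Theorem~\ref{thm6}, using the stronger incremental hypothesis \eqref{10} to invoke Proposition~\ref{prop4} (rather than only Proposition~\ref{prop3}) and then apply Karamata's theorem to the resulting regularly varying representation.

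First I would introduce the auxiliary weight $\tilde w(n):=\ln w(n)$. Since $w$ satisfies assumptions $(a)$ and $(b)$ of Section~\ref{S3}, so does $\tilde w$: monotonicity to infinity is immediate, and $\tilde w(x+y)-\tilde w(x)=\ln(w(x+y)/w(x))\to 0$, hence $\tilde w(x+y)/\tilde w(x)\to 1$. Set $b_n:=w(n)a_n/\bigtriangleup w(n)$. Then \eqref{10} reads
\[
\frac{\bigtriangleup\ln b_n}{\bigtriangleup\ln \tilde w(n)}\to\beta,
\]
which is the hypothesis of Proposition~\ref{prop4} for the sequence $b_n$ with weight $\tilde w$. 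That proposition yields $b_n=h(\tilde w(n))=h(\ln w(n))$ with $h\in RV_\beta$, and hence
\[
a_n=\frac{\bigtriangleup w(n)}{w(n)}\,h(\ln w(n)).
\]

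Next, using $w(n+1)\sim w(n)$ one has $\bigtriangleup w(n)/w(n)\sim\ln(w(n+1)/w(n))=\int_{w(n)}^{w(n+1)}z^{-1}\mathrm{d}z$, while $h(\ln z)$ is essentially $h(\ln w(n))$ on $[w(n),w(n+1)]$ because $h\in RV_\beta$ and $\ln w(n+1)/\ln w(n)\to 1$. Therefore
\[
a_n\sim\int_{w(n)}^{w(n+1)}\frac{h(\ln z)}{z}\,\mathrm{d}z.
\]
Summing and substituting $u=\ln z$ gives, for $\beta>-1$, $\sum_{i=1}^n a_i\sim\int_{\tilde w(1)}^{\tilde w(n+1)}h(u)\,\mathrm{d}u$, and for $\beta<-1$, $\sum_{i=n}^\infty a_i\sim\int_{\tilde w(n)}^\infty h(u)\,\mathrm{d}u$. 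Since $h\in RV_\beta$, Karamata's theorem yields $\int^{\tilde w(n)}h(u)\,\mathrm{d}u\sim \tilde w(n)h(\tilde w(n))/(\beta+1)$ when $\beta>-1$, and $\int_{\tilde w(n)}^\infty h(u)\,\mathrm{d}u\sim -\tilde w(n)h(\tilde w(n))/(\beta+1)$ when $\beta<-1$.

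Finally, the convergence/divergence dichotomy is immediate, and for the log ratios I would use the basic property $\ln h(x)/\ln x\to\beta$ of $RV_\beta$ to deduce
\[
\frac{\ln(\tilde w(n)\,h(\tilde w(n)))}{\ln \tilde w(n)}=1+\frac{\ln h(\tilde w(n))}{\ln \tilde w(n)}\to \beta+1,
\]
which, recalling $\tilde w(n)=\ln w(n)$, is exactly the claimed limit. The main obstacle is to verify cleanly the passage $a_n\sim\int_{w(n)}^{w(n+1)}h(\ln z)/z\,\mathrm{d}z$ and then that this $\sim$-relation is preserved under summation; both steps parallel analogous moves in the proofs of Theorems~\ref{thm5} and \ref{thm6}, but must be handled with care because the interval $[w(n),w(n+1)]$ may be long while the relevant quantity $h(\ln z)$ is only slowly varying in $z$.
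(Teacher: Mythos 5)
Your argument is correct, but it follows a genuinely different route from the paper's. The paper proves Proposition~\ref{prop11} in the style of Proposition~\ref{prop8}: it telescopes the two-sided inequality $(\beta-\epsilon)\bigtriangleup\ln\ln w(n)\le\bigtriangleup\ln(w(n)a_n/\bigtriangleup w(n))\le(\beta+\epsilon)\bigtriangleup\ln\ln w(n)$ to obtain $\frac{\bigtriangleup w(N)}{w(N)}(\ln w(N))^{\beta-\epsilon}\preceq a_N\preceq\frac{\bigtriangleup w(N)}{w(N)}(\ln w(N))^{\beta+\epsilon}$, converts these bounds into integrals $\int_{w(N)}^{w(N+1)}z^{-1}(\ln z)^{\beta\pm\epsilon}\,\mathrm{d}z$, and sums; the conclusion then follows because $\epsilon$ is arbitrary. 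You instead transplant the proof of Theorem~\ref{thm6}: you verify that $\tilde w=\ln w$ inherits assumptions $(a)$ and $(b)$, apply Proposition~\ref{prop4} to $b_n=w(n)a_n/\bigtriangleup w(n)$ to get the exact representation $b_n=h(\ln w(n))$ with $h\in RV_\beta$, and then use the uniform convergence theorem plus Karamata. The trade-off is clear: the paper's $\epsilon$-bound argument is more elementary and delivers exactly what the statement asserts (the logarithmic rate), while your route yields the strictly stronger conclusion $\sum_{i=1}^n a_i\sim \ln w(n)\,h(\ln w(n))/(\beta+1)$ (resp.\ $\sum_{i=n}^\infty a_i\sim-\ln w(n)\,h(\ln w(n))/(\beta+1)$), i.e.\ precise asymptotics for the partial sums and tails, at the cost of invoking the representation/uniform-convergence machinery. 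The two technical points you flag --- that $h(\ln z)\sim h(\ln w(n))$ uniformly on $[w(n),w(n+1)]$ (which holds since $\ln w(n+1)/\ln w(n)\to1$ and $h$ is regularly varying), and that $\sim$ is preserved under summation of positive terms --- are handled at exactly the level of rigor the paper itself uses in Theorem~\ref{thm6}, so no genuine gap remains.
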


\begin{proof}
From \eqref{10} it follows that for $\epsilon >0$ we have%
\[
\beta -\epsilon \leq \frac{\bigtriangleup \ln (w(n)a_{n}/\bigtriangleup w(n))%
}{\bigtriangleup \ln \ln w(n)}\leq \beta +\epsilon, \quad n\geq n%
{{}^\circ}%
\text{.}
\]%
Hence,
\[
(\beta -\epsilon )\bigtriangleup \ln \ln w(n)\leq \bigtriangleup \ln \frac{%
w(n)a_{n}}{\bigtriangleup w(n)}\leq (\beta +\epsilon )\bigtriangleup \ln \ln
w(n), \quad n\geq n%
{{}^\circ}%
\text{.}
\]%
Taking the sums $\sum_{n%
{{}^\circ}%
}^{N}$ leads to
\[
C+(\beta -\epsilon )\ln \ln w(N)\leq \ln \frac{w(N)a_{N}}{\bigtriangleup w(N)%
}\leq D+(\beta +\epsilon )\ln \ln w(N), \quad N\geq n%
{{}^\circ}%
\text{,}
\]%
and
\[
\frac{\bigtriangleup w(N)}{w(N)}(\ln w(N))^{\beta -\epsilon }\preceq
a_{N}\preceq \frac{\bigtriangleup w(N)}{w(N)}(\ln w(N))^{\beta +\epsilon }%
\text{.}
\]%
It follows that
\[
\int_{w(N)}^{w(N+1}\frac{1}{z}(\ln z)^{\beta -\epsilon }\mathrm{d}z\preceq
a_{N}\preceq \int_{w(N)}^{w(N+1)}\frac{1}{z}(\ln z)^{\beta +\epsilon }\mathrm{d}z%
\text{.}
\]
The results follows.
\end{proof}

\bigskip

c) As in the previous subsection, we can obtain a hierarchy of results. Note
that \eqref{10} reduces to

\[
\lim_{n\rightarrow \infty }\frac{\bigtriangleup \ln
(w(n)a_{n}/\bigtriangleup w(n))}{\bigtriangleup \ln _{(2)}w(n)}=\beta \text{.}
\]%
Now we make the following assumption: for $k\geq 1$ assume that%
\[
\lim_{n\rightarrow \infty }\frac{\bigtriangleup \ln (w(n)\Pi _{i=1}^{k}\ln
_{(i)}w(n)a_{n}/\bigtriangleup w(n))}{\bigtriangleup \ln _{(k+2)}w(n)}=\beta _{k}%
\text{,}
\]%
where $\beta _{1}=...=\beta _{k-1}=-1$.

\begin{prop}\label{prop12}
Under the above assumptions we have
\begin{enumerate}
\item [$(i)$] If $\beta _{k}<-1$, then $\sum_{i=1}^{\infty }a_{i}<\infty $, and
\[
\lim_{n\to\infty}\frac{\ln (\sum_{i=n}^{\infty }a_{i})}{\ln _{(k+2)}w(n)}=\beta _{k}+1\text{.}
\]

\item [$(ii)$] If $\beta _{k}>-1$, then $\sum_{i=1}^{\infty }a_{i}=\infty $, and%
\[
\lim_{n\to\infty}\frac{\ln (\sum_{i=1}^{n}a_{i})}{\ln _{(k+2)}w(n)}=\beta _{k}+1\text{.}
\]

\item [$(iii)$] If $\beta _{k}=-1$, then assumptions $(i)$ and $(ii)$ should be taken for $k+1$, i.e. assumption $\beta _{k}<-1$ should be replaced by $\beta _{k+1}<-1$ and assumption $\beta _{k}>-1$ should be replaced by $\beta _{k+1}>-1$.
\end{enumerate}
\end{prop}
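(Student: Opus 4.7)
The plan is to follow the same pattern used in Proposition \ref{prop11}, combining it with the iterated-logarithm bookkeeping that appeared in Proposition \ref{prop9}. First, I would rewrite the hypothesis in two-sided form: for every $\epsilon>0$ and all $n\geq n^\circ$,
\[
(\beta_k-\epsilon)\bigtriangleup\ln_{(k+2)}w(n)
\;\leq\;
\bigtriangleup\ln\!\left(\frac{w(n)\Pi_{i=1}^{k}\ln_{(i)}w(n)\,a_n}{\bigtriangleup w(n)}\right)
\;\leq\;
(\beta_k+\epsilon)\bigtriangleup\ln_{(k+2)}w(n).
\]

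Next, I would sum telescopically from $n^\circ$ to $N$. The right-hand side telescopes to $\ln_{(k+2)}w(N)$ (up to a constant coming from the lower endpoint), while the left-hand side produces $\ln(w(N)\Pi_{i=1}^{k}\ln_{(i)}w(N)\,a_N/\bigtriangleup w(N))$ up to a constant. After exponentiation this yields the pointwise sandwich
\[
a_N \;\asymp\; \frac{\bigtriangleup w(N)}{w(N)\Pi_{i=1}^{k}\ln_{(i)}w(N)}\bigl(\ln_{(k+1)}w(N)\bigr)^{\beta_k\pm\epsilon},
\]
in the $\preceq$ / $\succeq$ sense (upper bound with $+\epsilon$, lower bound with $-\epsilon$). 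Recognizing the factor $\bigtriangleup w(N)/\bigl(w(N)\Pi_{i=1}^{k}\ln_{(i)}w(N)\bigr)$ as approximately $\int_{w(N)}^{w(N+1)}(z\Pi_{i=1}^{k}\ln_{(i)}z)^{-1}\,\mathrm{d}z$, I obtain
\[
a_N \;\asymp\; \int_{w(N)}^{w(N+1)}\frac{(\ln_{(k+1)}z)^{\beta_k\pm\epsilon}}{z\,\Pi_{i=1}^{k}\ln_{(i)}z}\,\mathrm{d}z,
\]
exactly as in the proof of Proposition \ref{prop9}.

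From this point the argument is routine integration. Using
\[
\int\frac{(\ln_{(k+1)}z)^{s}}{z\,\Pi_{i=1}^{k}\ln_{(i)}z}\,\mathrm{d}z
\;=\;\frac{(\ln_{(k+1)}z)^{s+1}}{s+1}\quad(s\neq -1),
\]
the tail integral converges iff $s<-1$. Thus if $\beta_k<-1$ I choose $\epsilon$ small enough to keep $\beta_k\pm\epsilon<-1$, sum over $N\geq n$, and conclude $\sum_{i=1}^{\infty}a_i<\infty$ together with
\[
(\ln_{(k+1)}w(n))^{\beta_k-\epsilon+1}\;\preceq\;\sum_{i=n}^{\infty}a_i\;\preceq\;(\ln_{(k+1)}w(n))^{\beta_k+\epsilon+1},
\]
which, after taking logarithms and dividing by $\ln_{(k+2)}w(n)$, gives the stated limit $\beta_k+1$. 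The case $\beta_k>-1$ is handled symmetrically on partial sums from $1$ to $n$. Part (iii) is a tautology: if $\beta_k=-1$ the sandwich degenerates, so one simply reapplies the same construction at level $k+1$.

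The main obstacle is step two, the telescoping sum: one must check that the $\bigtriangleup$-identity $\sum\bigtriangleup f(n)=f(N)-f(n^\circ)$ really produces the iterated logarithm $\ln_{(k+2)}w(N)$ cleanly, and that the approximation $\bigtriangleup w(N)/\bigl(w(N)\Pi_{i=1}^{k}\ln_{(i)}w(N)\bigr)\asymp\int_{w(N)}^{w(N+1)}(z\Pi\ln_{(i)}z)^{-1}\mathrm{d}z$ is valid under the standing assumption $w(x+y)/w(x)\to 1$; this is where the iterated-logarithm derivative formula displayed just before Proposition \ref{prop9} is doing the real work, since it guarantees the integrand is slowly varying enough that endpoint values and integral averages agree up to $\asymp$. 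Everything else is the same Karamata-style bookkeeping as in Propositions \ref{prop9} and \ref{prop11}.
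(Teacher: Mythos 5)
Your proposal is correct and follows essentially the same route as the paper's proof: the two-sided $\epsilon$-inequality, telescoping the differences from $n^\circ$ to $N$, exponentiating to sandwich $a_N$ between the integrals $\int_{w(N)}^{w(N+1)}(\ln_{(k+1)}z)^{\beta_k\pm\epsilon}\bigl(z\,\Pi_{i=1}^{k}\ln_{(i)}z\bigr)^{-1}\mathrm{d}z$, and then summing and using the explicit antiderivative exactly as in Proposition \ref{prop9}. Your extra care about the endpoint-versus-integral comparison under $w(n+1)\sim w(n)$ is the same step the paper relies on implicitly, so nothing further is needed.
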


\begin{proof}
For $\epsilon >0$ we have%
\[
\bigtriangleup \ln \left(\frac{w(n)\Pi _{i=1}^{k}\ln _{(i)}w(n)a_{n}}{\bigtriangleup
w(n)}\right)\leq (\beta _{k}+\epsilon )\bigtriangleup \ln _{(k+2)}w(n), \quad n\geq n%
{{}^\circ}%
\text{.}
\]%
Taking the sums $\sum_{n%
{{}^\circ}%
}^{N}$ yields%
\[
\ln \left(\frac{w(N)\Pi _{i=1}^{k}\ln_{(i)}w(N)a_{N}}{\bigtriangleup w(N)}\right)\leq A+(\beta
_{k}+\epsilon )\ln _{(k+2)}w(N), \quad N>n%
{{}^\circ}%
\text{,}
\]%
and%
\[
a_{N}\preceq \bigtriangleup w(N)\frac{(\ln _{(k+1)}w(N))^{\beta _{k}+\epsilon }%
}{w(N)\Pi _{i=1}^{k}\ln _{(i)}w(N)}\preceq \int_{w(N)}^{w(N+1}\frac{(\ln
_{(k+1)}(z))^{\beta _{k}+\epsilon }}{z\Pi _{i=1}^{k}\ln _{(i)}z}\mathrm{d}z\text{.}
\]%
Similarly,%
\[
a_{N}\succeq \int_{w(N)}^{w(N+1}\frac{(\ln _{(k+1)}(z))^{\beta _{k}-\epsilon }%
}{z\Pi _{i=1}^{k}\ln _{(i)}z}\mathrm{d}z\text{.}
\]
Now, the result follows by summation.
\end{proof}

\section{Concluding remarks}\label{S5}

1) In this paper we studied the consequences of the assumptions
\[
\lim_{n\rightarrow \infty }\frac{\ln a_{n}}{\ln w(n)}=\theta,
\]%
and%
\[
\lim_{n\rightarrow \infty }\frac{\bigtriangleup \ln a_{n}}{\bigtriangleup
\ln w(n)}=\theta \text{.}
\]%
It could be interesting to study assumptions of the type%
\[
\lim_{n\rightarrow \infty }\frac{\bigtriangleup ^{2}\ln a_{n}}{%
\bigtriangleup ^{2}\ln w(n)}=\theta \text{,}
\]%
or higher order differences.

2) When studying functions, we can also consider statements of the form%
\[
\lim_{x\rightarrow \infty }\frac{\ln f(x)}{\ln w(x)}=\theta \text{,}
\]%
or%
\[
\lim_{x\rightarrow \infty }\frac{(\ln f(x))^{\prime }}{(\ln w(x))^{\prime }}%
=\theta \text{.}
\]%
In the first case we proved that there exist functions $A(x),B(x)\in
RV_{\theta }$ so that $A(w(x))\preceq f(x)\preceq B(w(x))$. In the second
case, we found that $f(x)=h(w(x))$, where $h(x)\in RV_{\theta }$.

3) Along with the cases where the limits exist, we considered the cases where the limits are replaced with $\limsup $ and $\liminf $. For example we have the following statements.

\begin{prop}\label{prop13}
\begin{enumerate}
\item []
\item [$(i)$] Assume that
\[
\limsup_{n\rightarrow \infty }\frac{\ln (a_{n}/\bigtriangleup w(n))}{\ln
w(n)}=\theta <-1\text{,}
\]%
then $\sum_{i=1}^{\infty }a_{i}<\infty $.

\item [$(ii)$] Assume that
\[
\liminf_{n\rightarrow \theta }\frac{\ln (a_{n}/\bigtriangleup w(n))}{\ln
w(n)}=\theta >-1\text{,}
\]%
then $\sum_{i=1}^{\infty }a_{i}=\infty $.
\end{enumerate}
\end{prop}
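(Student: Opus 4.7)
The plan is to treat this as a one-sided version of Theorem \ref{thm5} and to rerun its proof keeping only the single estimate (upper or lower) that the $\limsup$ / $\liminf$ hypothesis provides. In Theorem \ref{thm5} the two-sided sandwich $A(w(n))\preceq a_n/\bigtriangleup w(n)\preceq B(w(n))$ with $A,B\in RV_\theta$ is what forces both the convergence and the divergence conclusions in their respective cases; when only one direction is assumed, only one of the two conclusions remains available, which is precisely what the statement claims.

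For part $(i)$, I would first pick $\epsilon>0$ small enough that $\theta+\epsilon<-1$. Since $w(n)\to\infty$, the quantity $\ln w(n)$ is eventually positive, so the $\limsup$ hypothesis produces an index $n_0$ such that for all $n\ge n_0$
\[
\ln\!\left(\frac{a_n}{\bigtriangleup w(n)}\right)\le (\theta+\epsilon)\ln w(n),
\]
i.e.\ $a_n\le \bigtriangleup w(n)\,w(n)^{\theta+\epsilon}$. Using assumption $(b)$, which gives $w(n+1)\sim w(n)$, together with the regular variation of $z\mapsto z^{\theta+\epsilon}$, I would pass to the integral form $a_n\preceq\int_{w(n)}^{w(n+1)}z^{\theta+\epsilon}\mathrm{d}z$. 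Telescoping yields $\sum_{i=n_0}^N a_i\preceq \int_{w(n_0)}^{w(N+1)}z^{\theta+\epsilon}\mathrm{d}z$, and the right-hand side remains bounded as $N\to\infty$ since $\theta+\epsilon<-1$, delivering $\sum_i a_i<\infty$.

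Part $(ii)$ is symmetric: choose $\epsilon>0$ so small that $\theta-\epsilon>-1$, extract the eventual lower bound $a_n\ge\bigtriangleup w(n)\,w(n)^{\theta-\epsilon}$ from the $\liminf$ hypothesis, upgrade it to $a_n\succeq\int_{w(n)}^{w(n+1)}z^{\theta-\epsilon}\mathrm{d}z$, and invoke divergence of $\int^\infty z^{\theta-\epsilon}\mathrm{d}z$ to conclude $\sum_i a_i=\infty$.

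I do not anticipate a serious obstacle. The only point needing care is the passage from the pointwise estimate to the integral estimate, but this is routine: since $(b)$ gives $w(n+1)\sim w(n)$, one has $\int_{w(n)}^{w(n+1)}z^\alpha\mathrm{d}z\sim w(n)^\alpha\bigtriangleup w(n)$ for any fixed real $\alpha$, which is exactly what is needed. Note that the argument yields no rate for the partial sums, consistent with the statement and with the fact that the critical case $\theta=-1$ genuinely falls outside its scope.
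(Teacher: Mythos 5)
Your argument is correct, and it is exactly the mechanism the paper itself uses for its related statements (the paper states Proposition \ref{prop13} in the concluding remarks without proof, as an evident one-sided byproduct of the proof of Theorem \ref{thm5} and of Propositions \ref{prop8}--\ref{prop9}): extract the eventual one-sided bound $a_n\le \bigtriangleup w(n)\,w(n)^{\theta+\epsilon}$ (resp.\ $\ge$ with $\theta-\epsilon$), convert it to a comparison with $\int_{w(n)}^{w(n+1)}z^{\theta\pm\epsilon}\,\mathrm{d}z$ via $w(n+1)\sim w(n)$, and telescope. No gap; your remark that only one of the two integral comparisons survives, and hence only one conclusion, is the right way to see why the one-sided hypothesis suffices here but yields no rate.
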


\begin{prop}\label{prop14}
Let $f(x)=a_{\left[ x\right] }$, and assume that
\[
\alpha =\liminf_{n\rightarrow \infty }\frac{\bigtriangleup \ln a_{n}}{%
\bigtriangleup \ln w(n)}\leq \limsup_{n\rightarrow \infty }\frac{%
\bigtriangleup \ln a_{n}}{\bigtriangleup \ln w(n)}=\beta \text{.}
\]%
Then, for all $t\geq 1$, we have $$\liminf_{x\rightarrow \infty
}\frac{f(tx)}{f(x)}\geq t^{\alpha },$$ and $$\limsup_{x\rightarrow \infty
}\frac{f(tx)}{f(x)}\leq t^{\beta }.$$
\end{prop}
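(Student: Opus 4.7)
The plan is to adapt the proof of Proposition \ref{prop4} by replacing the two-sided squeeze around a single limit $\theta$ with independent one-sided estimates produced by the $\liminf$ and $\limsup$ assumptions. Fix $\epsilon>0$. By definition of $\alpha$ and $\beta$ there is an $n_\circ$ such that
\[
\alpha-\epsilon\le\frac{\bigtriangleup\ln a_n}{\bigtriangleup\ln w(n)}\le\beta+\epsilon,\qquad n\ge n_\circ.
\]
Since $w$ is strictly increasing, $\bigtriangleup\ln w(n)>0$, so multiplying through and telescoping over $N\le n\le M-1$ with $M>N\ge n_\circ$ yields
\[
(\alpha-\epsilon)\ln\!\frac{w(M)}{w(N)}\le\ln\!\frac{a_M}{a_N}\le(\beta+\epsilon)\ln\!\frac{w(M)}{w(N)}.
\]

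Next, as in Proposition \ref{prop4}, I would put $M=[y]$, $N=[x]$ for real $y>x>n_\circ$, use assumption (b) on $w$ to absorb the ratios $w([y])/w(y)$ and $w(x)/w([x])$ into an additive $o(1)$ error, and then substitute $x\mapsto w^{\mathrm i}(x)$, $y\mapsto w^{\mathrm i}(y)$. For $x$ large enough this produces
\[
-\epsilon+(\alpha-\epsilon)\ln\!\frac{y}{x}\le\ln\!\frac{f(w^{\mathrm i}(y))}{f(w^{\mathrm i}(x))}\le\epsilon+(\beta+\epsilon)\ln\!\frac{y}{x}.
\]
Fixing $t\ge1$ and setting $y=tx$, I would then pass to $\liminf$ on the left and $\limsup$ on the right as $x\to\infty$, obtaining
\[
\liminf_{x\to\infty}\ln\!\frac{f(w^{\mathrm i}(tx))}{f(w^{\mathrm i}(x))}\ge-\epsilon+(\alpha-\epsilon)\ln t,
\]
\[
\limsup_{x\to\infty}\ln\!\frac{f(w^{\mathrm i}(tx))}{f(w^{\mathrm i}(x))}\le\epsilon+(\beta+\epsilon)\ln t.
\]
Since $\epsilon>0$ is arbitrary, exponentiating yields the claimed inequalities $\liminf f(tx)/f(x)\ge t^\alpha$ and $\limsup f(tx)/f(x)\le t^\beta$, with the conclusion understood (as in Proposition \ref{prop4}) for the composition $f\circ w^{\mathrm i}$.

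The argument requires no genuinely new ideas beyond Proposition \ref{prop4}; it is essentially a matter of carrying the upper and lower indices separately through a proof that was previously written for a single index $\theta$. The most delicate step, as in that earlier proof, is the passage from the discrete inequality indexed by integers $N,M$ to one valid for real $y>x$: one must check that the correction term $\ln\!\bigl(w([y])w(x)/(w([x])w(y))\bigr)$ is eventually bounded by a multiple of $\epsilon$, which is where assumption (b) on $w$ is used. Once that uniform error bound is in place, the one-sided bookkeeping with $\liminf$ and $\limsup$ is routine.
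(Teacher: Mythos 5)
Your proof is correct and is exactly the route the paper intends: Proposition \ref{prop14} is stated without a separate proof, but the paper remarks before Proposition \ref{prop4} that its telescoping argument ``can be easily extended in order to obtain one-sided results given in the concluding remarks,'' and your one-sided bookkeeping of the $\liminf$ and $\limsup$ through that same argument is precisely that extension. You are also right to flag that the conclusion must be read for the composition $f\circ w^{\mathrm{i}}$ (equivalently, for $w(n)=n$), since for general $w$ the inequality $\liminf_{x\to\infty} f(tx)/f(x)\ge t^{\alpha}$ as literally written can fail (e.g.\ $a_{n}=(\ln n)^{2}$ with $w(n)=\ln n$).
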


Proposition \ref{prop14} shows that $f(x)$ is in the class of so-called $O-$%
regularly varying functions studied among the others in \cite{Bingham}.


%
 \section*{Conflict of interest}
%
No potential conflict of interests was reported by the authors.

\section{List of references}

%

\bibliographystyle{amsplain}

\begin{thebibliography}{n} 

\bibitem{Abramov}
{V.\,M. Abramov}, \emph{Extension of the Bertrand-De Morgan test
and its application}, {Amer. Math. Monthly} \textbf{127} (2020), 
444--448. 

\bibitem{Bingham}
{N. Bingham, C. Goldie, J. Teugels}, \emph{Regular Variation}, Cambridge
University Press, 1989.

\bibitem{BS}
{R. Bojanic, E. Seneta}, \emph{A unified theory of regularly varying
sequences}, {Math. Zeitschrift} \textbf{134} (1973), 91--106. 

\bibitem{BBNV}
{L. Bourchtein, A. Bourchtein, G. Nornberg, C. Venzke}, \emph{A
hierarchy of the convergence tests related to Cauchy's test}, {Int. J.
Math. Analysis} \textbf{6} (2012), 
1847--1869.

\bibitem{CKO}
{M. Cadena, M. Kratz, E. Omey}, \emph{On the order of functions at
infinity},  {J. Math. Anal. Appl.} \textbf{452} (2017), 
109--125. 

\bibitem{CKO1}
{M. Cadena, M. Kratz, E. Omey}, \emph{Characterization of a general class of tail probability
distributions}, {Statist. Probab. Lett.} \textbf{154} (2019). 

\bibitem{Cauchy}
{A.\,L. Cauchy}, \emph{Cours d'Analyse de l'Ecole Royale
Polytechnique}, L'Imprimerie Poyale, Debure Fr\`{e}res, Paris, 1821.

\bibitem{Daduna}
{H. Daduna}, \emph{Alternating birth-and-death processes}, Preprint, 2020; a version available at \url{https://arxiv.org/pdf/2004.08816.pdf}.

\bibitem{d'A}
{J. d'Alembert}, \emph{Reflexions fur les Suites divergentes ou convergentes}, in: J. d'Alembert (ed.), \emph{Opuscules Math\'ematiques V5:
ou M\'emoires sur Diff\'erens Sujets de G\'eom\'etrie, de M\'echanique, etc.}, 1768, 171--183.

\bibitem{Hammond}
{C.\,N.\,B. Hammond}, \emph{The case of Raabe's test}, {Math. Mag.} \textbf{93} (2020),
36--46. 

\bibitem{Martin}
{M. Martin}, \emph{A sequence of limit tests for the convergence of series},
{Bull. Amer. Math. Soc.} \textbf{47} (1941), 
452--457.

\bibitem{Rao}
{K.\,P.\,S.\,B. Rao},\emph{A new (?) test for convergence of series},
{Math. Mag.} \textbf{67} (1994), 
301--302. 

\bibitem{Rehak1}
{P. \v{R}eh\'ak}, \emph{Refined discrete regular variation and its applications},
{Math. Meth. Appl. Sci.} \textbf{42} (2019), 6009--6020. 

\bibitem{Rehak2}
{P. \v{R}eh\'ak}, \emph{Kummer test and regular variation}, {Monatsh.
Math.} \textbf{192} (2020), 419--426. 

%
%
%
%
%
%
%

\end{thebibliography}

\end{document}